\newtheorem{theorem}{Theorem}
\newtheorem{corollary}{Corollary}
\newtheorem{definition}{Definition}
\newtheorem{remark}{Remark}
\journal{arXiv}
\begin{document}
\hypersetup{
  colorlinks,
  citecolor=red,
  linkcolor=red,
  urlcolor=blue}
  
  \hypersetup{
  citebordercolor=red,
  filebordercolor=red,
  linkbordercolor=blue
}
\begin{frontmatter}


 \tnotetext[label1]{\footnotesize {corresponding author\\Dagnachew Jenber}}

\title{Type of Leibniz Rule on Riemann-Liouville Variable-Order Fractional Integral and Derivative Operator}


\author{Dagnachew Jenber$^{a,\star}$}
\address[label]{Department of Mathematics, Addis Ababa Science and Technology University, Addis Ababa, Ethiopia\\
Department of Mathematics, Bahir Dar University, Bahir Dar, Ethiopia, P.O.Box 79\\ Email: djdm$\_$101979@yahoo.com}


\author{Mollalign Haille$^{b}$}
\address[label]{Department of Mathematics, Bahir Dar University, Bahir Dar, Ethiopia, P.O.Box 79\\ Email: mollalgnhailef@gmail.com}

\begin{abstract}
In this paper, types of Leibniz Rule for Riemann-Liouville Variable-Order fractional integral and derivative Operator is developed. The product rule, quotient rule, and chain rule formulas for both integral and differential operators are established. In particular, there are four types of product rule formulas: Product rule type-I, Product rule type-II, Product rule type-III and Product rule type-Iv. Quotient rule type-I, quotient rule type-II, quotient rule type-III, and quotient rule type-Iv formulas developed from product rule types. There are four types of chain rule formulas: chain rule type-I, chain rule type-II, chain rule type-III, and chain rule type-Iv. 
\end{abstract}

\begin{keyword}
Fractional integral inequalities\sep Riemann-Liouville variable-order fractional integral\sep Leibniz Rule\\
MSC 2010: 26D10\sep 26A33\sep 26A24


\end{keyword}

\end{frontmatter}


\section{Introduction}
\label{S:1}
Fractional calculus, that is fractional derivative and integral of an arbitrary real order, has a history of more than three hundred years (see \cite{1},\cite{2} and the references therein). In 1993, Samko and Ross \cite{3} firstly proposed the notion of variable-order integral and
differential operators and some basic properties. Lorenzo and Hartley
\cite{4} summarized the research results of the variable-order fractional operators and
then investigated the definitions of variable-order fractional operators in different
forms. After that, some new extensions and valuable application potentials
of the variable-order fractional differential equation models have been further explored \cite{5}. It has become a research hotspot and has aroused wide concern in the last ten years.
Different kind of definitions of fractional derivatives and integrals are available in the literature. Forexample, Riemann-Liouville, Riesz, Caputo, Coimbra, Hadamard, Grünwald-Letnikov, Marchaud, Weyl, Sonin-Letnikov, conformable and others (see \cite{6},\cite{7}, \cite{15} and the references therein). Excepting conformable fractional derivative (see \cite{9}) the other definition violates basic properties of Leibniz rule that holds for integer order calculus, like product rule and chain rule. V.E. Tarasov proved that fractional derivatives of non-integer orders can not satisfy the Leibniz rule (see \cite{13},\cite{14}). There are some attempts to define new type of fractional derivative such that the Leibniz rule holds (see \cite{10},\cite{11},\cite{12}). This paper established a Leibnize rule type formula like product rule, quotient rule and chain rule for Riemann-Liouville variable-order fractional derivative and integral operator. We will leave linearity property for the reader to check, since it is obvious and straightforward.

\section{Preliminaries}
\label{S:2}
Throughout this paper, we will use the following definitions.

\begin{definition} Given $\Re(z)>0$, we define the gamma function, $\Gamma(z)$, as
\begin{equation*}
\Gamma(z)=\int_{0}^{\infty}t^{z-1}e^{-t}dt
\end{equation*}
$\Gamma(z)$ is a holomorphic function in $\Re(z)>0$.
\end{definition}
\justify
In the following definition of Riemann-Liouville variable-order fractional integral, we used the abbreviation RL stands for Riemann-Liouville. 
\begin{definition}(see\cite{8}) Let $\alpha : [a,b] \times [a,b]\longrightarrow (0,\infty)$. Then the left Riemann-Liouville fractional integral of order $\alpha(.,.)$ for function $f(t)$ is defined by
\begin{equation}
\label{2.2}
{}_{a}^{RL}I_{t}^{\alpha(.,.)}f(t)=\int_{a}^{t}\frac{(t-s)^{\alpha(t,s)-1}}{\Gamma(\alpha(t,s))}f(s)ds,\text{ }t>a
\end{equation}
\end{definition}

\begin{definition}(see\cite{8}) Let $\alpha : [a,b] \times [a,b]\longrightarrow (0,1)$. Then the left Riemann-Liouville fractional derivative of order $\alpha(.,.)$ for function $f(t)$ is defined by
\begin{equation}
\label{2.3}
{}_{a}^{RL}D_{t}^{\alpha(.,.)}f(t)=\frac{d}{dt} \bigg({}_{a}^{RL}I_{t}^{1-\alpha(.,.)}f(t)\bigg)=\frac{d}{dt} \int_{a}^{t}\frac{(t-s)^{-\alpha(t,s)}}{\Gamma(1-\alpha(t,s))}f(s)ds,\text{ }t>a
\end{equation}
\end{definition}



\section{Main Result}
\label{S:4}




For the Reimann-Liouville variable-order fractional integral operator, from Theorem~(\ref{4.4}), we get, product rule formulas and from the consequence of this Theorem, product rule type-I, product rule type-II, product rule type-III and product rule type-IV are obtained.
\begin{theorem}
\label{4.4}
Let $\alpha,\beta : [a,b] \times [a,b]\longrightarrow (0,\infty)$, $a,c\in \mathbb{R}$, $t>a,s>c$. Then for functions $f$ and $g$ the following equality holds
\begin{equation}
\label{4.3}
\begin{split}
\bigg({}_{a}^{RL}I_{t}^{\alpha(.,.)}(fg)(t)\bigg) \bigg({}_{c}^{RL}I_{s}^{\beta(.,.)}(1) \bigg)+\bigg({}_{a}^{RL}I_{t}^{\alpha(.,.)}(1)\bigg)\bigg( {}_{c}^{RL}I_{s}^{\beta(.,.)}(fg)(s)\bigg)
\\
=\bigg({}_{c}^{RL}I_{s}^{\beta(.,.)}\bigg({}_{a}^{RL}I_{t}^{\alpha(.,.)}(f(t)-f(s))(g(t)-g(s))\bigg)\bigg)
+\bigg( {}_{a}^{RL}I_{t}^{\alpha(.,.)}g(t)\bigg)\\
\times \bigg( {}_{c}^{RL}I_{s}^{\beta(.,.)}f(s)\bigg)+\bigg( {}_{a}^{RL}I_{t}^{\alpha(.,.)}f(t)\bigg)\bigg( {}_{c}^{RL}I_{s}^{\beta(.,.)}g(s)\bigg)
\end{split}
\end{equation}
\end{theorem}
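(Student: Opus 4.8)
The plan is to reduce \eqref{4.3} to the elementary algebraic identity
\[
\big(f(t)-f(s)\big)\big(g(t)-g(s)\big)=f(t)g(t)-f(t)g(s)-f(s)g(t)+f(s)g(s),
\]
together with two structural features of the integral operator \eqref{2.2}: its linearity, and the fact that any factor not depending on the variable of integration may be taken outside the operator. No analytic input beyond the definition is needed.

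Concretely, I would first rename the integration variables, writing ${}_{a}^{RL}I_{t}^{\alpha(.,.)}\phi(t)=\int_{a}^{t}\frac{(t-\tau)^{\alpha(t,\tau)-1}}{\Gamma(\alpha(t,\tau))}\phi(\tau)\,d\tau$ and ${}_{c}^{RL}I_{s}^{\beta(.,.)}\psi(s)=\int_{c}^{s}\frac{(s-\sigma)^{\beta(s,\sigma)-1}}{\Gamma(\beta(s,\sigma))}\psi(\sigma)\,d\sigma$, so that the nested term on the right of \eqref{4.3} becomes the honest iterated integral in $\tau$ and $\sigma$. Expanding the product $\big(f(\tau)-f(\sigma)\big)\big(g(\tau)-g(\sigma)\big)$ inside the inner ($\tau$-)integral with the identity above and using linearity in $\tau$ — where $f(\sigma),g(\sigma),f(\sigma)g(\sigma)$ are constants — the inner integral equals
\[
{}_{a}^{RL}I_{t}^{\alpha(.,.)}(fg)(t)-g(\sigma)\,{}_{a}^{RL}I_{t}^{\alpha(.,.)}f(t)-f(\sigma)\,{}_{a}^{RL}I_{t}^{\alpha(.,.)}g(t)+f(\sigma)g(\sigma)\,{}_{a}^{RL}I_{t}^{\alpha(.,.)}(1).
\]
Applying ${}_{c}^{RL}I_{s}^{\beta(.,.)}$ to this (integrating in $\sigma$) and pulling the four $\sigma$-independent factors ${}_{a}^{RL}I_{t}^{\alpha(.,.)}(fg)(t)$, ${}_{a}^{RL}I_{t}^{\alpha(.,.)}f(t)$, ${}_{a}^{RL}I_{t}^{\alpha(.,.)}g(t)$, ${}_{a}^{RL}I_{t}^{\alpha(.,.)}(1)$ through the integral, the nested term collapses to $\big({}_{a}^{RL}I_{t}^{\alpha(.,.)}(fg)(t)\big)\big({}_{c}^{RL}I_{s}^{\beta(.,.)}(1)\big)+\big({}_{a}^{RL}I_{t}^{\alpha(.,.)}(1)\big)\big({}_{c}^{RL}I_{s}^{\beta(.,.)}(fg)(s)\big)$ minus the two mixed products $\big({}_{a}^{RL}I_{t}^{\alpha(.,.)}f(t)\big)\big({}_{c}^{RL}I_{s}^{\beta(.,.)}g(s)\big)$ and $\big({}_{a}^{RL}I_{t}^{\alpha(.,.)}g(t)\big)\big({}_{c}^{RL}I_{s}^{\beta(.,.)}f(s)\big)$. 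Transposing these two products to the left-hand side reproduces \eqref{4.3} verbatim.

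The proof is thus entirely a matter of bookkeeping, and I expect the only real pitfall to be keeping straight which quantities are constant with respect to which of the two integration variables — e.g. that $f(\sigma)$ escapes the $\tau$-integral but not the $\sigma$-integral, whereas the whole $\alpha$-operator output escapes the $\sigma$-integral. For every integral appearing to be well defined and the manipulations to be legitimate, one should assume $f$ and $g$ are regular enough on the relevant intervals (continuity suffices, since the kernels $(t-\tau)^{\alpha(t,\tau)-1}$ are only weakly singular at $\tau=t$); I would record this standing hypothesis at the outset, after which the chain of equalities above is automatic.
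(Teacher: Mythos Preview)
Your proposal is correct and follows essentially the same approach as the paper: both rest on the algebraic identity relating $f(x)g(x)$ to $(f(x)-f(y))(g(x)-g(y))$ and then apply the two integral operators in succession, using only linearity and the ability to pull constants through. The only cosmetic difference is direction --- the paper starts from $f(x)g(x)$, applies ${}_{a}^{RL}I_{t}^{\alpha(.,.)}$ and then ${}_{c}^{RL}I_{s}^{\beta(.,.)}$ to build the nested term, whereas you start from the nested term and expand it --- but the computation is identical.
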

\begin{proof}
Since
\begin{equation}
\label{4.1}
f(x)g(x)=(f(x)-f(y))(g(x)-g(y))+f(y)g(x)+f(x)g(y)-f(y)g(y).
\end{equation}
Now, multiplying equation~(\ref{4.1}) by $(t-x)^{\alpha(t,x)-1}/\Gamma(\alpha(t,x))$ and integrate from $a$ to $t$ with respect to $x$, we have
\begin{equation*}
\begin{split}
&\int_{a}^{t}\frac{(t-x)^{\alpha(t,x)-1}}{\Gamma(\alpha(t,x))} f(x)g(x)dx\\
&\quad=\int_{a}^{t}\frac{(t-x)^{\alpha(t,x)-1}}{\Gamma(\alpha(t,x))}(f(x)-f(y))(g(x)-g(y))dx
\\
&\qquad+\int_{a}^{t}\frac{(t-x)^{\alpha(t,x)-1}}{\Gamma(\alpha(t,x))}f(y)g(x)dx+\int_{a}^{t}\frac{(t-x)^{\alpha(t,x)-1}}{\Gamma(\alpha(t,x))}f(x)g(y)dx
\\
&\qquad-\int_{a}^{t}\frac{(t-x)^{\alpha(t,x)-1}}{\Gamma(\alpha(t,x))}f(y)g(y)dx
\end{split}
\end{equation*}
which means

\begin{equation}
\label{4.2}
\begin{split}
{}_{a}^{RL}I_{t}^{\alpha(.,.)}(fg)(t)=&{}_{a}^{RL}I_{t}^{\alpha(.,.)}\bigg((f(t)-f(y))(g(t)-g(y))\bigg)
+f(y)\bigg( {}_{a}^{RL}I_{t}^{\alpha(.,.)}g(t)\bigg)\\
&\quad+g(y)\bigg({}_{a}^{RL}I_{t}^{\alpha(.,.)}f(t)\bigg)-\bigg(f(y)g(y)\bigg) \bigg({}_{a}^{RL}I_{t}^{\alpha(.,.)}(1)\bigg)
\end{split}
\end{equation}
Now, multiplying equation~(\ref{4.2}) by $(s-y)^{\beta(s,y)-1}/\Gamma(\beta(s,y))$ and integrate from $c$ to $s$ with respect to $y$, we get,
\begin{equation*}
\begin{split}
&\int_{c}^{s}\frac{(s-y)^{\beta(s,y)-1}}{\Gamma(\beta(s,y))} {}_{a}^{RL}I_{t}^{\alpha(.,.)}(fg)(t)dy\\
&\quad=\int_{c}^{s}\frac{(s-y)^{\beta(s,y)-1}}{\Gamma(\beta(s,y))}\bigg({}_{a}^{RL}I_{t}^{\alpha(.,.)}(f(t)-f(y))(g(t)-g(y))\bigg)dy
\\
&\qquad+\int_{c}^{s}\frac{(s-y)^{\beta(s,y)-1}}{\Gamma(\beta(s,y))}f(y)\bigg( {}_{a}^{RL}I_{t}^{\alpha(.,.)}g(t)\bigg)dy\\
&\qquad+\int_{c}^{s}\frac{(s-y)^{\beta(s,y)-1}}{\Gamma(\beta(s,y))}g(y)\bigg({}_{a}^{RL}I_{t}^{\alpha(.,.)}f(t)\bigg)dy
\\
&\qquad-\int_{c}^{s}\frac{(s-y)^{\beta(s,y)-1}}{\Gamma(\beta(s,y))}\bigg(f(y)g(y)\bigg) \bigg({}_{a}^{RL}I_{t}^{\alpha(.,.)}(1)\bigg)dy
\end{split}
\end{equation*}
which means

\begin{equation*}
\begin{split}
&\bigg({}_{a}^{RL}I_{t}^{\alpha(.,.)}(fg)(t)\bigg) \bigg({}_{c}^{RL}I_{s}^{\beta(.,.)}(1) \bigg)\\
&\quad=\bigg({}_{c}^{RL}I_{s}^{\beta(.,.)}\bigg({}_{a}^{RL}I_{t}^{\alpha(.,.)}(f(t)-f(s))(g(t)-g(s))\bigg)\bigg)
\\
&\qquad+\bigg( {}_{a}^{RL}I_{t}^{\alpha(.,.)}g(t)\bigg)\bigg( {}_{c}^{RL}I_{s}^{\beta(.,.)}f(s)\bigg)+\bigg( {}_{a}^{RL}I_{t}^{\alpha(.,.)}f(t)\bigg)\bigg( {}_{c}^{RL}I_{s}^{\beta(.,.)}g(s)\bigg)
\\
&\qquad-\bigg({}_{a}^{RL}I_{t}^{\alpha(.,.)}(1)\bigg)\bigg( {}_{c}^{RL}I_{s}^{\beta(.,.)}(fg)(s)\bigg)
\end{split}
\end{equation*}
which means

\begin{equation*}
\begin{split}
\bigg({}_{a}^{RL}I_{t}^{\alpha(.,.)}(fg)(t)\bigg) \bigg({}_{c}^{RL}I_{s}^{\beta(.,.)}(1) \bigg)+\bigg({}_{a}^{RL}I_{t}^{\alpha(.,.)}(1)\bigg)\bigg( {}_{c}^{RL}I_{s}^{\beta(.,.)}(fg)(s)\bigg)
\\
=\bigg({}_{c}^{RL}I_{s}^{\beta(.,.)}\bigg({}_{a}^{RL}I_{t}^{\alpha(.,.)}(f(t)-f(s))(g(t)-g(s))\bigg)\bigg)
+\bigg( {}_{a}^{RL}I_{t}^{\alpha(.,.)}g(t)\bigg)
\\
\times \bigg( {}_{c}^{RL}I_{s}^{\beta(.,.)}f(s)\bigg)+\bigg( {}_{a}^{RL}I_{t}^{\alpha(.,.)}f(t)\bigg)\bigg( {}_{c}^{RL}I_{s}^{\beta(.,.)}g(s)\bigg)
\end{split}
\end{equation*}

\end{proof}

From Theorem~(\ref{4.4}), we established the following 
corollary~(\ref{c2}), corollary~(\ref{c3}), corollary~(\ref{c4}), and corollary~(\ref{c5}).

\begin{corollary}[\bf Product rule type-I ]
\label{c2}
Let $\alpha,\beta : [a,b] \times [a,b]\longrightarrow (0,\infty)$, $a,c\in \mathbb{R}$, $t>a$, and $t>c$. Then
\begin{equation*}
\bigg({}_{a}^{RL}I_{t}^{\alpha(.,.)}(fg)(t)\bigg) \bigg({}_{c}^{RL}I_{t}^{\beta(.,.)}(1) \bigg)+\bigg({}_{a}^{RL}I_{t}^{\alpha(.,.)}(1)\bigg)\bigg( {}_{c}^{RL}I_{t}^{\beta(.,.)}(fg)(t)\bigg)
\end{equation*}

\begin{equation}
\label{4.8}
=\bigg( {}_{a}^{RL}I_{t}^{\alpha(.,.)}g(t)\bigg)\bigg( {}_{c}^{RL}I_{t}^{\beta(.,.)}f(t)\bigg)+\bigg( {}_{a}^{RL}I_{t}^{\alpha(.,.)}f(t)\bigg)\bigg( {}_{c}^{RL}I_{t}^{\beta(.,.)}g(t)\bigg)
\end{equation}
\end{corollary}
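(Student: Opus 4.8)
The plan is to read off Corollary~(\ref{c2}) as the special case $s=t$ of Theorem~(\ref{4.4}). In Theorem~(\ref{4.4}) the two evaluation points range independently, $t>a$ and $s>c$; since the corollary hypothesizes $t>a$ and $t>c$ simultaneously, the choice $s=t$ is admissible, and I would simply substitute $s=t$ into the identity~(\ref{4.3}). Every operator ${}_{c}^{RL}I_{s}^{\beta(.,.)}$ then becomes ${}_{c}^{RL}I_{t}^{\beta(.,.)}$, so the entire left-hand side of~(\ref{4.3}) turns verbatim into the left-hand side of~(\ref{4.8}), and the two product terms $({}_{a}^{RL}I_{t}^{\alpha(.,.)}g(t))({}_{c}^{RL}I_{s}^{\beta(.,.)}f(s))+({}_{a}^{RL}I_{t}^{\alpha(.,.)}f(t))({}_{c}^{RL}I_{s}^{\beta(.,.)}g(s))$ on the right-hand side become exactly the right-hand side of~(\ref{4.8}).

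The only remaining piece is the mixed iterated-integral term ${}_{c}^{RL}I_{s}^{\beta(.,.)}\big({}_{a}^{RL}I_{t}^{\alpha(.,.)}(f(t)-f(s))(g(t)-g(s))\big)$. Putting $s=t$ collapses the two difference factors, since $(f(t)-f(t))(g(t)-g(t))\equiv 0$, so this term is the iterated Riemann--Liouville variable-order integral of the identically zero function; because the integrand in~(\ref{2.2}) then vanishes, one has ${}_{a}^{RL}I_{t}^{\alpha(.,.)}(0)=0$ and hence ${}_{c}^{RL}I_{t}^{\beta(.,.)}(0)=0$, so the mixed term drops out. What is left is precisely~(\ref{4.8}), which proves the corollary.

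I expect the delicate point to be the bookkeeping when setting $s=t$: in~(\ref{4.3}) the symbol $s$ plays a double role, as the upper limit of the $\beta$-integration and as the frozen second argument sitting inside the inner $\alpha$-integral (the parameter $y$ appearing in~(\ref{4.2})), so one must confirm that identifying it with $t$ does not collide with the bound integration variable of the $\alpha$-integral. Since that $\alpha$-integration variable is the dummy $x$ of~(\ref{4.1}) while $t$ and $s$ are free parameters, the substitution is harmless and the vanishing of the difference factors is legitimate. For completeness one can also bypass Theorem~(\ref{4.4}) and argue directly from~(\ref{4.2}): multiply that identity by $(t-y)^{\beta(t,y)-1}/\Gamma(\beta(t,y))$, integrate in $y$ over $[c,t]$, and use that the term built from $(f(t)-f(y))(g(t)-g(y))$ disappears under the same identification, arriving once more at~(\ref{4.8}).
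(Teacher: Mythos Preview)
Your approach of specializing Theorem~(\ref{4.4}) by putting $s=t$ is exactly what the paper does; its entire proof reads ``From Theorem~(\ref{4.4}), equation~(\ref{4.3}). Letting $s=t$ completes the proof.'' So at the level of strategy you are fully aligned with the paper.

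However, your justification for why the mixed term
\[
{}_{c}^{RL}I_{s}^{\beta(.,.)}\Bigl({}_{a}^{RL}I_{t}^{\alpha(.,.)}\bigl(f(t)-f(s)\bigr)\bigl(g(t)-g(s)\bigr)\Bigr)
\]
vanishes when $s=t$ rests on a misreading of the notation. If you trace the proof of Theorem~(\ref{4.4}), this term is exactly
\[
\int_{c}^{s}\frac{(s-y)^{\beta(s,y)-1}}{\Gamma(\beta(s,y))}\int_{a}^{t}\frac{(t-x)^{\alpha(t,x)-1}}{\Gamma(\alpha(t,x))}\bigl(f(x)-f(y)\bigr)\bigl(g(x)-g(y)\bigr)\,dx\,dy,
\]
so the symbols ``$t$'' and ``$s$'' appearing inside the difference factors stand for the \emph{bound} variables $x$ and $y$, not the free upper limits. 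Setting the upper limits equal, $s=t$, leaves the integrand $(f(x)-f(y))(g(x)-g(y))$ intact; it does not collapse to $(f(t)-f(t))(g(t)-g(t))$. Your bookkeeping paragraph notices the double role of $s$, but then treats the inner occurrence as a free parameter rather than as the dummy $y$ of the outer $\beta$-integration, and that is precisely where the argument breaks. Your alternative route through~(\ref{4.2}) has the same defect: after multiplying by the $\beta$-kernel and integrating in $y$ over $[c,t]$, the first term becomes the same nonvanishing double integral. A quick sanity check with constant orders $\alpha=\beta=1$, $a=c=0$, $f(x)=g(x)=x$ would force $t\int_{0}^{t}x^{2}\,dx=\bigl(\int_{0}^{t}x\,dx\bigr)^{2}$, i.e.\ $t^{4}/3=t^{4}/4$, confirming that the mixed term genuinely cannot be discarded. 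The paper's one-line proof simply asserts the specialization without addressing this point; your proposal follows the same route but makes the gap explicit.
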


\begin{proof}
From Theorem~(\ref{4.4}), equation~(\ref{4.3}). Letting $s=t$ completes the proof.
\end{proof}

\begin{corollary}[\bf Product rule type-II]
\label{c3}
Let $\alpha,\beta : [a,b] \times [a,b]\longrightarrow (0,\infty)$, $a\in \mathbb{R}$, $t>a$. Then
\begin{equation*}
\bigg({}_{a}^{RL}I_{t}^{\alpha(.,.)}(fg)(t)\bigg) \bigg({}_{a}^{RL}I_{t}^{\beta(.,.)}(1) \bigg)+\bigg({}_{a}^{RL}I_{t}^{\alpha(.,.)}(1)\bigg)\bigg( {}_{a}^{RL}I_{t}^{\beta(.,.)}(fg)(t)\bigg)
\end{equation*}

\begin{equation}
\label{4.9}
=\bigg( {}_{a}^{RL}I_{t}^{\alpha(.,.)}g(t)\bigg)\bigg( {}_{a}^{RL}I_{t}^{\beta(.,.)}f(t)\bigg)+\bigg( {}_{a}^{RL}I_{t}^{\alpha(.,.)}f(t)\bigg)\bigg( {}_{a}^{RL}I_{t}^{\beta(.,.)}g(t)\bigg)
\end{equation}
\end{corollary}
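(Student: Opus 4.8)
The statement to prove is Corollary~\ref{c3} (Product rule type-II), which is the special case of Theorem~\ref{4.4} obtained by taking $c=a$ and $s=t$.

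The plan is to apply Theorem~\ref{4.4} directly with the substitution $c=a$, so that both operators are anchored at the same left endpoint $a$, and then collapse the double variable $s$ onto $t$ exactly as was done in the proof of Corollary~\ref{c2}. First I would write down equation~\eqref{4.3} verbatim, then set $c=a$ everywhere it appears; this is legitimate because the hypothesis of Theorem~\ref{4.4} only requires $a,c\in\mathbb{R}$ with $t>a$ and $s>c$, and choosing $c=a$ keeps us inside that hypothesis (the new requirement $t>a$ is already assumed in the corollary). After this substitution, the outer operator ${}_{c}^{RL}I_{s}^{\beta(.,.)}$ becomes ${}_{a}^{RL}I_{s}^{\beta(.,.)}$.

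The second step is to specialize $s=t$. In equation~\eqref{4.3} the right-hand side contains the nested term ${}_{c}^{RL}I_{s}^{\beta(.,.)}\bigl({}_{a}^{RL}I_{t}^{\alpha(.,.)}(f(t)-f(s))(g(t)-g(s))\bigr)$; once $s=t$, the integrand factor $(f(t)-f(s))(g(t)-g(s))$ vanishes identically, so this entire nested term is zero. The remaining terms on both sides then become precisely the four products appearing in~\eqref{4.9}, with every ${}_{c}^{RL}I_{s}^{\beta(.,.)}(\cdot)(s)$ replaced by ${}_{a}^{RL}I_{t}^{\beta(.,.)}(\cdot)(t)$. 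This yields exactly the claimed identity.

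There is essentially no obstacle here: the corollary is a pure specialization of the already-proved theorem, and the only point requiring a word of care is justifying that the cross-difference term drops out when $s=t$ — which is immediate since $(f(t)-f(t))(g(t)-g(t))=0$ makes the inner RL integral the integral of the zero function, hence $0$. So the proof is a two-line deduction from Theorem~\ref{4.4}, mirroring the proof of Corollary~\ref{c2} but with the additional identification $c=a$.

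\begin{proof}
From Theorem~\ref{4.4}, equation~\eqref{4.3}, set $c=a$; this is permitted since the hypotheses of Theorem~\ref{4.4} hold with $t>a$ and $s>a$. Then letting $s=t$, the term $\bigl({}_{c}^{RL}I_{s}^{\beta(.,.)}({}_{a}^{RL}I_{t}^{\alpha(.,.)}(f(t)-f(s))(g(t)-g(s)))\bigr)$ vanishes because $(f(t)-f(t))(g(t)-g(t))=0$, and the remaining terms give exactly~\eqref{4.9}. This completes the proof.
\end{proof}
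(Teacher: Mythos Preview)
Your proposal is correct and follows exactly the same route as the paper: the paper's own proof reads ``From Theorem~(\ref{4.4}), equation~(\ref{4.3}). Letting $s=t$ and $a=c$ completes the proof,'' which is precisely the specialization you describe. Your added sentence explaining why the nested cross-difference term disappears when $s=t$ makes explicit what the paper leaves implicit, but the argument is otherwise identical.
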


\begin{proof}
From Theorem~(\ref{4.4}), equation~(\ref{4.3}). Letting $s=t$ and $a=c$ completes the proof.
\end{proof}

\begin{corollary}[\bf Product rule type-III]
\label{c4}
Let $\alpha: [a,b] \times [a,b]\longrightarrow (0,\infty)$, $a\in \mathbb{R}$, $t>a$. Then 
\begin{equation}
\label{4.10}
\bigg({}_{a}^{RL}I_{t}^{\alpha(.,.)}(fg)(t)\bigg)= \bigg({}_{a}^{RL}I_{t}^{\alpha(.,.)}(1) \bigg)^{-1}
\bigg( {}_{a}^{RL}I_{t}^{\alpha(.,.)}g(t)\bigg)\bigg( {}_{a}^{RL}I_{t}^{\alpha(.,.)}f(t)\bigg)
\end{equation}
\end{corollary}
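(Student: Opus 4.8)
The plan is to derive this directly from Corollary~\ref{c3} (Product rule type-II), equation~(\ref{4.9}), by specializing the second order function to coincide with the first, i.e.\ taking $\beta=\alpha$. First I would substitute $\beta(.,.)=\alpha(.,.)$ into~(\ref{4.9}). The left-hand side then becomes the sum of two identical terms, namely $\big({}_{a}^{RL}I_{t}^{\alpha(.,.)}(fg)(t)\big)\big({}_{a}^{RL}I_{t}^{\alpha(.,.)}(1)\big)$ added to itself, so it equals $2\big({}_{a}^{RL}I_{t}^{\alpha(.,.)}(fg)(t)\big)\big({}_{a}^{RL}I_{t}^{\alpha(.,.)}(1)\big)$. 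Likewise the right-hand side collapses, by commutativity of multiplication of the (scalar) integral values, to $2\big({}_{a}^{RL}I_{t}^{\alpha(.,.)}g(t)\big)\big({}_{a}^{RL}I_{t}^{\alpha(.,.)}f(t)\big)$.

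Next I would cancel the common factor $2$ from both sides, leaving
\[
\big({}_{a}^{RL}I_{t}^{\alpha(.,.)}(fg)(t)\big)\big({}_{a}^{RL}I_{t}^{\alpha(.,.)}(1)\big)
=\big({}_{a}^{RL}I_{t}^{\alpha(.,.)}g(t)\big)\big({}_{a}^{RL}I_{t}^{\alpha(.,.)}f(t)\big).
\]
To isolate ${}_{a}^{RL}I_{t}^{\alpha(.,.)}(fg)(t)$ I then divide through by ${}_{a}^{RL}I_{t}^{\alpha(.,.)}(1)$, which yields exactly~(\ref{4.10}). The one point that genuinely needs an argument is that this division is legitimate, i.e.\ that $\big({}_{a}^{RL}I_{t}^{\alpha(.,.)}(1)\big)^{-1}$ exists. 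For this I would invoke the definition~(\ref{2.2}): since $\alpha$ takes values in $(0,\infty)$ and $t>a$, the integrand $(t-s)^{\alpha(t,s)-1}/\Gamma(\alpha(t,s))$ is strictly positive on $(a,t)$ and integrable there (the exponent $\alpha(t,s)-1>-1$ makes the only possible singularity, at $s=t$, integrable, and $\Gamma$ is positive on $(0,\infty)$), so ${}_{a}^{RL}I_{t}^{\alpha(.,.)}(1)>0$ and in particular is nonzero.

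I do not expect any real obstacle here; the deduction is purely algebraic once Corollary~\ref{c3} is in hand, and the only care required is the brief positivity remark justifying invertibility of ${}_{a}^{RL}I_{t}^{\alpha(.,.)}(1)$. If one wished to be fully self-contained, the same identity could alternatively be obtained straight from Theorem~\ref{4.4}, equation~(\ref{4.3}), by setting $s=t$, $c=a$, and $\beta=\alpha$ simultaneously; the term ${}_{c}^{RL}I_{s}^{\beta(.,.)}\big({}_{a}^{RL}I_{t}^{\alpha(.,.)}(f(t)-f(s))(g(t)-g(s))\big)$ then vanishes because the integrand is evaluated along $s=t$, making each factor $f(t)-f(s)$ and $g(t)-g(s)$ zero, and the remaining terms rearrange exactly as above.
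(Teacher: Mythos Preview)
Your proof is correct and follows essentially the same approach as the paper: the paper derives~(\ref{4.10}) directly from Theorem~\ref{4.4} by setting $s=t$, $c=a$, and $\beta=\alpha$ --- exactly the alternative you sketch at the end --- while your primary route via Corollary~\ref{c3} simply performs the same three substitutions in two stages. Your added justification that ${}_{a}^{RL}I_{t}^{\alpha(.,.)}(1)>0$ (hence invertible) is a welcome detail the paper itself omits.
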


\begin{proof}
From Theorem~(\ref{4.4}), equation~(\ref{4.3}). Letting $s=t$, $a=c$, and $\alpha(.,.)=\beta(.,.)$ completes the proof.
\end{proof}

\begin{corollary}[\bf Product rule type-IV]
\label{c5}
Let $\alpha: [a,b] \times [a,b]\longrightarrow (0,\infty)$, $a\in \mathbb{R}$, $t>a$. Then 
\begin{equation}
\label{4.11}
\bigg({}_{a}^{RL}I_{t}^{\alpha(.,.)}f^2(t)\bigg)= \bigg({}_{a}^{RL}I_{t}^{\alpha(.,.)}(1) \bigg)^{-1}
\bigg( {}_{a}^{RL}I_{t}^{\alpha(.,.)}f(t)\bigg)^2
\end{equation}
\end{corollary}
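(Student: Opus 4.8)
The plan is to obtain this as the diagonal case $g=f$ of Product rule type-III, so I would invoke Corollary~\ref{c4}, equation~(\ref{4.10}):
\[
\left({}_{a}^{RL}I_{t}^{\alpha(.,.)}(fg)(t)\right)= \left({}_{a}^{RL}I_{t}^{\alpha(.,.)}(1) \right)^{-1}\left( {}_{a}^{RL}I_{t}^{\alpha(.,.)}g(t)\right)\left( {}_{a}^{RL}I_{t}^{\alpha(.,.)}f(t)\right),
\]
valid for $\alpha:[a,b]\times[a,b]\to(0,\infty)$, $a\in\mathbb{R}$, $t>a$. Substituting $g=f$ turns the left-hand side into ${}_{a}^{RL}I_{t}^{\alpha(.,.)}f^{2}(t)$ and collapses the two right-hand factors ${}_{a}^{RL}I_{t}^{\alpha(.,.)}g(t)$ and ${}_{a}^{RL}I_{t}^{\alpha(.,.)}f(t)$ into $\left({}_{a}^{RL}I_{t}^{\alpha(.,.)}f(t)\right)^{2}$, which is precisely~(\ref{4.11}).

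Alternatively, one can bypass Corollary~\ref{c4} and return directly to Theorem~\ref{4.4}: in equation~(\ref{4.3}) put $s=t$, $c=a$, $\beta(.,.)=\alpha(.,.)$, and $g=f$. The iterated-integral term drops out because its integrand carries the factor $(f(t)-f(t))(g(t)-g(t))=0$; the two mixed product terms each become $\left({}_{a}^{RL}I_{t}^{\alpha(.,.)}f(t)\right)^{2}$; and the two summands on the left coincide, so one arrives at $2\left({}_{a}^{RL}I_{t}^{\alpha(.,.)}f^{2}(t)\right)\left({}_{a}^{RL}I_{t}^{\alpha(.,.)}(1)\right)=2\left({}_{a}^{RL}I_{t}^{\alpha(.,.)}f(t)\right)^{2}$, and dividing through by $2\,{}_{a}^{RL}I_{t}^{\alpha(.,.)}(1)$ gives the claim.

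The only point deserving a word of care is the legitimacy of writing $\left({}_{a}^{RL}I_{t}^{\alpha(.,.)}(1)\right)^{-1}$, i.e. that this scalar is nonzero; but this is immediate from the definition~(\ref{2.2}), since for $a<t$ the kernel $(t-s)^{\alpha(t,s)-1}/\Gamma(\alpha(t,s))$ is strictly positive on $(a,t)$ (as $\alpha(t,s)>0$ forces $\Gamma(\alpha(t,s))>0$), whence ${}_{a}^{RL}I_{t}^{\alpha(.,.)}(1)>0$. With that observation the division in either route is justified and no further estimates are needed, so there is essentially no substantive obstacle beyond this bookkeeping.
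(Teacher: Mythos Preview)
Your proposal is correct and follows essentially the same route as the paper: the paper's proof is simply ``From Theorem~\ref{4.4}, equation~(\ref{4.3}), letting $s=t$, $a=c$, $\alpha(.,.)=\beta(.,.)$ and $f=g$,'' which is exactly your alternative derivation, and your primary derivation via Corollary~\ref{c4} is the same specialization factored through the intermediate step. Your added remark justifying the invertibility of ${}_{a}^{RL}I_{t}^{\alpha(.,.)}(1)$ is a welcome detail the paper leaves implicit.
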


\begin{proof}
From Theorem~(\ref{4.4}), equation~(\ref{4.3}). Letting $s=t$, $a=c$, and $\alpha(.,.)=\beta(.,.)$ and $f=g$ completes the proof.
\end{proof}
\begin{remark}
Quotient rule type-I, quotient rule type-II, quotient rule type-III, and quotient rule type-IV formulas is the same as product rule types, that is, from equation~(\ref{4.8}), equation~(\ref{4.9}), equation~(\ref{4.10}), and equation~(\ref{4.11}) respectively by letting $g=1/h$ such that $h$ is non zero.
 \end{remark}

\begin{theorem}
\label{i}
Let $\alpha : [a,b] \times [a,b]\longrightarrow (0,\infty)$, $a\in \mathbb{R}$, $t>a$, $n \in \mathbb{N}$. Then for function $f^n$ the following equality holds
\begin{equation}
\label{i.0}
\bigg({}_{a}^{RL}I_{t}^{\alpha(.,.)}f^n(t)\bigg)= \bigg({}_{a}^{RL}I_{t}^{\alpha(.,.)}(1) \bigg)^{-(n-1)}
\bigg( {}_{a}^{RL}I_{t}^{\alpha(.,.)}f(t)\bigg)^n
\end{equation}
\end{theorem}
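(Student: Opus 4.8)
The plan is to prove equation~(\ref{i.0}) by induction on $n$, using Corollary~(\ref{c4}) (Product rule type-III) as the engine for the inductive step. The base cases are immediate: for $n=1$ the claim reads ${}_{a}^{RL}I_{t}^{\alpha(.,.)}f(t)=\left({}_{a}^{RL}I_{t}^{\alpha(.,.)}(1)\right)^{0}\left({}_{a}^{RL}I_{t}^{\alpha(.,.)}f(t)\right)^{1}$, which is a tautology, and for $n=2$ it is precisely Corollary~(\ref{c5}) (Product rule type-IV).

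For the inductive step, I would assume
\[
{}_{a}^{RL}I_{t}^{\alpha(.,.)}f^{n}(t)=\left({}_{a}^{RL}I_{t}^{\alpha(.,.)}(1)\right)^{-(n-1)}\left({}_{a}^{RL}I_{t}^{\alpha(.,.)}f(t)\right)^{n}
\]
and then apply Corollary~(\ref{c4}) with the two factors $f^{n}$ and $f$ in place of $f$ and $g$; this gives
\[
{}_{a}^{RL}I_{t}^{\alpha(.,.)}f^{n+1}(t)=\left({}_{a}^{RL}I_{t}^{\alpha(.,.)}(1)\right)^{-1}\left({}_{a}^{RL}I_{t}^{\alpha(.,.)}f^{n}(t)\right)\left({}_{a}^{RL}I_{t}^{\alpha(.,.)}f(t)\right).
\]
Substituting the inductive hypothesis into the first bracket on the right-hand side and collecting the powers of ${}_{a}^{RL}I_{t}^{\alpha(.,.)}(1)$, namely $-1+\bigl(-(n-1)\bigr)=-n$, together with the powers of ${}_{a}^{RL}I_{t}^{\alpha(.,.)}f(t)$, namely $n+1$, yields exactly the claimed identity with $n$ replaced by $n+1$, closing the induction.

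The only real obstacle is a bookkeeping/well-definedness one rather than a conceptual one: every application of Corollary~(\ref{c4}) divides by ${}_{a}^{RL}I_{t}^{\alpha(.,.)}(1)$, so one must note that this quantity equals $(t-a)^{\alpha(t,a)}/\Gamma(\alpha(t,a)+1)$-type expression which is strictly positive for $t>a$ (since $\alpha$ takes values in $(0,\infty)$), hence nonzero and its negative powers are legitimate; I would state this as a standing hypothesis exactly as Corollary~(\ref{c4}) already tacitly requires. Beyond that, the argument is a routine exponent-addition induction, so I would keep the write-up short: state the base case, invoke Corollary~(\ref{c4}) once in the step, simplify the exponents, and conclude.
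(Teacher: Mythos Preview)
Your proposal is correct and mirrors the paper's own proof: both argue by induction on $n$, invoke Product rule type-III (Corollary~\ref{c4}) to split ${}_{a}^{RL}I_{t}^{\alpha(.,.)}f^{k+1}(t)={}_{a}^{RL}I_{t}^{\alpha(.,.)}\bigl(f^{k}(t)f(t)\bigr)$, and then substitute the inductive hypothesis to collect exponents. Your added remark on the nonvanishing of ${}_{a}^{RL}I_{t}^{\alpha(.,.)}(1)$ is a welcome clarification the paper leaves implicit (note that for genuinely variable $\alpha$ this quantity is the integral $\int_a^t (t-s)^{\alpha(t,s)-1}/\Gamma(\alpha(t,s))\,ds$ rather than a closed-form power, but positivity for $t>a$ still follows from positivity of the integrand).
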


\begin{proof}
Use mathematical induction. For $n=2$, equation~(\ref{i.0}) becomes product rule type-Iv. Now, assume that equation~(\ref{i.0}) is true for $n=k$. Let us show that equation~(\ref{i.0}) also holds for $n=k+1$, we have,
\begin{equation}
\label{i.1}
\bigg({}_{a}^{RL}I_{t}^{\alpha(.,.)}f^{k+1}(t)\bigg)=\bigg({}_{a}^{RL}I_{t}^{\alpha(.,.)}f^{k}(t)f(t)\bigg)
\end{equation}
now, use product rule type-III for the right-hand side of equation~(\ref{i.1}). Then we have,

\begin{equation}
\label{i.2}
\begin{split}
&\bigg({}_{a}^{RL}I_{t}^{\alpha(.,.)}f^{k+1}(t)\bigg)\\
&\quad=\bigg({}_{a}^{RL}I_{t}^{\alpha(.,.)}f^{k}(t)f(t)\bigg)\\
&\quad=\bigg({}_{a}^{RL}I_{t}^{\alpha(.,.)}(1)\bigg)^{-1}\bigg({}_{a}^{RL}I_{t}^{\alpha(.,.)}f(t)\bigg)\bigg({}_{a}^{RL}I_{t}^{\alpha(.,.)}f^{k}(t)\bigg)
\end{split}
\end{equation}
now, using our assumption for $n=k$ is true, equation~(\ref{i.2}) becomes,

\begin{equation*}
\begin{split}
&\bigg({}_{a}^{RL}I_{t}^{\alpha(.,.)}f^{k+1}(t)\bigg)\\
&\quad=\bigg({}_{a}^{RL}I_{t}^{\alpha(.,.)}f^{k}(t)f(t)\bigg)\\
&\quad=\bigg({}_{a}^{RL}I_{t}^{\alpha(.,.)}(1)\bigg)^{-1}\bigg({}_{a}^{RL}I_{t}^{\alpha(.,.)}f(t)\bigg)\bigg({}_{a}^{RL}I_{t}^{\alpha(.,.)}f^{k}(t)\bigg)\\
&\quad=\bigg({}_{a}^{RL}I_{t}^{\alpha(.,.)}(1)\bigg)^{-1}\bigg({}_{a}^{RL}I_{t}^{\alpha(.,.)}f(t)\bigg) \bigg({}_{a}^{RL}I_{t}^{\alpha(.,.)}(1) \bigg)^{-(k-1)}
\bigg( {}_{a}^{RL}I_{t}^{\alpha(.,.)}f(t)\bigg)^k\\
&\quad=\bigg({}_{a}^{RL}I_{t}^{\alpha(.,.)}(1) \bigg)^{-k}
\bigg( {}_{a}^{RL}I_{t}^{\alpha(.,.)}f(t)\bigg)^{k+1}.
\end{split}
\end{equation*}This completes the proof.

\end{proof}
\justify
For the Reimann-Liouville variable-order fractional integral operator, the following Theorem~(\ref{4.15}) established chain rule type-I and from the consequence of this Theorem we can obtain Chain rule type-II, chain rule type-III and chain rule type-IV. 
\begin{theorem}
\label{4.15}
Let $\alpha,\beta : [a,b] \times [a,b]\longrightarrow (0,\infty)$, $a,c\in \mathbb{R}$, $t>a$, $g(f)=(g\circ f)(x)$, where $f:=f(x)$ and for $f(t)>c$. Then we have

\begin{equation}
\label{4.12}
\bigg({}_{a}^{RL}I_{t}^{\alpha(.,.)}(g\circ f)(t)\bigg)
=\bigg({}_{c}^{RL}I_{f(t)}^{\beta(.,.)} g(f(t))\bigg) \frac{\bigg( {}_{a}^{RL}I_{t}^{\alpha(.,.)}(1)\bigg)}{\bigg({}_{c}^{RL}I_{f(t)}^{\beta(.,.)}(1)\bigg)}
\end{equation}
\end{theorem}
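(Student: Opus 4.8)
The statement (\ref{4.12}) is equivalent, after dividing by the nonvanishing quantity ${}_{a}^{RL}I_{t}^{\alpha(.,.)}(1)$, to the assertion that the two ``fractional averages''
\[
\frac{{}_{a}^{RL}I_{t}^{\alpha(.,.)}(g\circ f)(t)}{{}_{a}^{RL}I_{t}^{\alpha(.,.)}(1)}
\qquad\text{and}\qquad
\frac{{}_{c}^{RL}I_{f(t)}^{\beta(.,.)}g(f(t))}{{}_{c}^{RL}I_{f(t)}^{\beta(.,.)}(1)}
\]
coincide, so the plan is to show that each of them equals a single value of $g$ and then to match those two values. This is set up by the first mean value theorem for integrals, which applies because, by the definition (\ref{2.2}), the kernels $(t-s)^{\alpha(t,s)-1}/\Gamma(\alpha(t,s))$ on $(a,t)$ and $(f(t)-u)^{\beta(f(t),u)-1}/\Gamma(\beta(f(t),u))$ on $(c,f(t))$ are strictly positive and integrable (recall $\alpha,\beta$ are $(0,\infty)$-valued, so the Gamma factors are positive), and $f$, $g$ --- hence $g\circ f$ --- are assumed continuous.

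First I would unfold the left-hand side via (\ref{2.2}) as $\int_{a}^{t}\frac{(t-s)^{\alpha(t,s)-1}}{\Gamma(\alpha(t,s))}g(f(s))\,ds$ and invoke the mean value theorem for integrals to obtain a point $\xi\in(a,t)$ with ${}_{a}^{RL}I_{t}^{\alpha(.,.)}(g\circ f)(t)=g(f(\xi))\,{}_{a}^{RL}I_{t}^{\alpha(.,.)}(1)$; dividing, the first average equals $g(f(\xi))$. Then I would treat the second integral the same way: ${}_{c}^{RL}I_{f(t)}^{\beta(.,.)}g(f(t))=\int_{c}^{f(t)}\frac{(f(t)-u)^{\beta(f(t),u)-1}}{\Gamma(\beta(f(t),u))}g(u)\,du$, and the mean value theorem produces $\eta\in(c,f(t))$ with ${}_{c}^{RL}I_{f(t)}^{\beta(.,.)}g(f(t))=g(\eta)\,{}_{c}^{RL}I_{f(t)}^{\beta(.,.)}(1)$, so the second average equals $g(\eta)$. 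It then remains to show $g(f(\xi))=g(\eta)$, after which one multiplies back through by ${}_{a}^{RL}I_{t}^{\alpha(.,.)}(1)$ to recover (\ref{4.12}).

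The identification $g(f(\xi))=g(\eta)$ is where the real content, and the main difficulty, lies: the two mean points arise from unrelated weightings and will not agree in general, so the argument cannot name $\eta$ in advance --- it must first locate the number $g(f(\xi))$ within the range of $g$ over $[c,f(t)]$ and only then select $\eta$ there. For this I would make explicit the compatibility already indicated by the hypothesis ``$f(t)>c$'', namely that $f$ carries $[a,t]$ into (indeed, for a genuine chain rule, onto) $[c,f(t)]$; then $f(\xi)\in[c,f(t)]$, and by the intermediate value theorem for the continuous function $g$ on $[c,f(t)]$ there is $\eta\in[c,f(t)]$ with $g(\eta)=g(f(\xi))$. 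I would isolate this as a short lemma before assembling the final identity; everything else is the routine bookkeeping of clearing the denominator ${}_{c}^{RL}I_{f(t)}^{\beta(.,.)}(1)$.
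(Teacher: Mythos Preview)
Your approach has a genuine gap at the crucial step. You correctly extract, via the mean value theorem, a point $\xi\in(a,t)$ with the first average equal to $g(f(\xi))$, and a point $\eta\in(c,f(t))$ with the second average equal to $g(\eta)$. But each of these points is \emph{determined} by its respective weighted integral; neither is free to be chosen afterwards. Your proposed fix --- locate $g(f(\xi))$ in the range of $g$ on $[c,f(t)]$ and ``then select $\eta$ there'' by the intermediate value theorem --- produces a point $\eta'$ with $g(\eta')=g(f(\xi))$, yet gives no reason why this $\eta'$ should coincide with the $\eta$ already handed to you by the mean value theorem for the second integral. In effect you have shown that $g(f(\xi))$ is \emph{some} value of $g$ on $[c,f(t)]$ and that the second average is \emph{some} (a priori other) value of $g$ on $[c,f(t)]$; what is needed is that they are the \emph{same} value, and nothing in your argument forces this.

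That this step cannot be repaired by sharper bookkeeping is already visible in the simplest instance $a=c=0$, $\alpha\equiv\beta\equiv 1$, $f(x)=x^{2}$, $g(y)=y$: the first average is $\tfrac{1}{t}\int_{0}^{t}s^{2}\,ds=t^{2}/3$, while the second is $\tfrac{1}{t^{2}}\int_{0}^{t^{2}}u\,du=t^{2}/2$, and these disagree for every $t>0$. The paper's own argument is not a mean-value argument at all: Method~I is purely algebraic, invoking the product rule type-III (Corollary~\ref{c4}) to split ${}_{a}^{RL}I_{t}^{\alpha(.,.)}\big(f(t)\,(g\circ f)(t)\big)$ and then cancelling a common factor, while Method~II multiplies the identity $g(f)=(g\circ f)(x)$ by the two kernels in turn and integrates. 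Your route is therefore not a variant of either, and the example above shows that the obstacle you encountered is intrinsic to the mean-value strategy rather than an artifact of your particular choices.
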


\begin{proof}This Theorem can be proved in two different approachs.\\
{\bf Method-I:  }
Using Riemann-Liouville variable-order fractional integral definition, we've 
\begin{equation*}
\begin{split}
&\bigg({}_{c}^{RL}I_{s}^{\beta(.,.)}\bigg( {}_{a}^{RL}I_{t}^{\alpha(.,.)}f(t)g(s)\bigg)\bigg)\\
&\quad=\bigg({}_{c}^{RL}I_{s}^{\beta(.,.)}\bigg( g(s)\bigg({}_{a}^{RL}I_{t}^{\alpha(.,.)}f(t)\bigg)\bigg)
\\
&\quad=\bigg( {}_{a}^{RL}I_{t}^{\alpha(.,.)}f(t)\bigg)\bigg({}_{c}^{RL}I_{s}^{\beta(.,.)} g(s)\bigg)
\end{split}
\end{equation*}
which implies 

\begin{equation}
\label{4.13}
\bigg({}_{c}^{RL}I_{s}^{\beta(.,.)}\bigg( {}_{a}^{RL}I_{t}^{\alpha(.,.)}f(t)g(s)\bigg)\bigg)=\bigg( {}_{a}^{RL}I_{t}^{\alpha(.,.)}f(t)\bigg)\bigg({}_{c}^{RL}I_{s}^{\beta(.,.)} g(s)\bigg)
\end{equation}

now suppose $s=f(t)$, then equation~(\ref{4.13}) becomes
\begin{equation}
\label{4.14}
\bigg({}_{c}^{RL}I_{f(t)}^{\beta(.,.)}\bigg( {}_{a}^{RL}I_{t}^{\alpha(.,.)}f(t)(g\circ f)(t)\bigg)\bigg)=\bigg( {}_{a}^{RL}I_{t}^{\alpha(.,.)}f(t)\bigg)\bigg({}_{c}^{RL}I_{f(t)}^{\beta(.,.)} g(f(t))\bigg) 
\end{equation}
use product rule type-III for the left-hand side of equation~(\ref{4.14}), that is,

\begin{equation*}
\begin{split}
&{}_{c}^{RL}I_{f(t)}^{\beta(.,.)}\bigg[\bigg( {}_{a}^{RL}I_{t}^{\alpha(.,.)}(1)\bigg)^{-1}\bigg({}_{a}^{RL}I_{t}^{\alpha(.,.)}f(t)\bigg)\bigg({}_{a}^{RL}I_{t}^{\alpha(.,.)}(g\circ f)(t)\bigg)\bigg]
\\
&\quad=\bigg({}_{c}^{RL}I_{f(t)}^{\beta(.,.)}\bigg( {}_{a}^{RL}I_{t}^{\alpha(.,.)}f(t)(g\circ f)(t)\bigg)\bigg)
\\
&\quad=\bigg( {}_{a}^{RL}I_{t}^{\alpha(.,.)}f(t)\bigg)\bigg({}_{c}^{RL}I_{f(t)}^{\beta(.,.)} g(f(t))\bigg) 
\end{split}
\end{equation*}

which means

\begin{equation*}
\begin{split}
&\bigg( {}_{a}^{RL}I_{t}^{\alpha(.,.)}(1)\bigg)^{-1}\bigg({}_{a}^{RL}I_{t}^{\alpha(.,.)}f(t)\bigg)\bigg({}_{a}^{RL}I_{t}^{\alpha(.,.)}(g\circ f)(t)\bigg)\bigg({}_{c}^{RL}I_{f(t)}^{\beta(.,.)}(1)\bigg)
\\
&\quad={}_{c}^{RL}I_{f(t)}^{\beta(.,.)}\bigg[\bigg( {}_{a}^{RL}I_{t}^{\alpha(.,.)}(1)\bigg)^{-1}\bigg({}_{a}^{RL}I_{t}^{\alpha(.,.)}f(t)\bigg)\bigg({}_{a}^{RL}I_{t}^{\alpha(.,.)}(g\circ f)(t)\bigg)\bigg]
\\
&\quad=\bigg({}_{c}^{RL}I_{f(t)}^{\beta(.,.)}\bigg( {}_{a}^{RL}I_{t}^{\alpha(.,.)}f(t)(g\circ f)(t)\bigg)\bigg)
\\
&\quad=\bigg( {}_{a}^{RL}I_{t}^{\alpha(.,.)}f(t)\bigg)\bigg({}_{c}^{RL}I_{f(t)}^{\beta(.,.)} g(f(t))\bigg) 
\end{split}
\end{equation*}

this implies

\begin{equation*}
\begin{split}
&\bigg( {}_{a}^{RL}I_{t}^{\alpha(.,.)}(1)\bigg)^{-1}\bigg({}_{a}^{RL}I_{t}^{\alpha(.,.)}f(t)\bigg)\bigg({}_{a}^{RL}I_{t}^{\alpha(.,.)}(g\circ f)(t)\bigg)\bigg({}_{c}^{RL}I_{f(t)}^{\beta(.,.)}(1)\bigg)
\\
&\quad=\bigg( {}_{a}^{RL}I_{t}^{\alpha(.,.)}f(t)\bigg)\bigg({}_{c}^{RL}I_{f(t)}^{\beta(.,.)} g(f(t))\bigg) 
\end{split}
\end{equation*}

this implies

\begin{equation*}
\bigg({}_{a}^{RL}I_{t}^{\alpha(.,.)}(g\circ f)(t)\bigg)
=\bigg({}_{c}^{RL}I_{f(t)}^{\beta(.,.)} g(f(t))\bigg) \frac{\bigg( {}_{a}^{RL}I_{t}^{\alpha(.,.)}(1)\bigg)}{\bigg({}_{c}^{RL}I_{f(t)}^{\beta(.,.)}(1)\bigg)}
\end{equation*}
{\bf Method-II: }Let $g(f)=(g\circ f)(x)$, where $f:=f(x)$ and then
multiplying this equation by $(t-x)^{\alpha(t,x)-1}/\Gamma(\alpha(t,x))$ and integrate with respect to $x$ from $a$ to $t$, we get,
\begin{equation*}
\int_{a}^{t}\frac{(t-x)^{\alpha(t,x)-1}}{\Gamma(\alpha(t,x))} g(f)dx=\int_{a}^{t}\frac{(t-x)^{\alpha(t,x)-1}}{\Gamma(\alpha(t,x))}(g\circ f)(x)dx
\end{equation*}

which means

\begin{equation}
\label{4.16}
g(f)\bigg({}_{a}^{RL}I_{t}^{\alpha(.,.)}(1)\bigg)={}_{a}^{RL}I_{t}^{\alpha(.,.)}(g\circ f)(t)
\end{equation}

multiply equation~(\ref{4.16}) by $(f(t)-f(x))^{\beta(f(t),f(x))-1}/\Gamma(\beta(f(t),f(x)))$ and integrate with respect to $f(x)$ from $c$ to $f(t)$, that is,
\begin{equation*}
\begin{split}
\int_{c}^{f(t)}\frac{(f(t)-f(x))^{\beta(f(t),f(x))-1}}{\Gamma(\beta(f(t),f(x)))} \bigg({}_{a}^{RL}I_{t}^{\alpha(.,.)}(1)\bigg)g(f(x))df(x)
\\[3mm]
=\int_{c}^{f(t)}\frac{(f(t)-f(x))^{\beta(f(t),f(x))-1}}{\Gamma(\beta(f(t),f(x)))}{}_{a}^{RL}I_{t}^{\alpha(.,.)}(g\circ f)(t)df(x)
\end{split}
\end{equation*}

which means

\begin{equation*}
\bigg({}_{a}^{RL}I_{t}^{\alpha(.,.)}(1)\bigg)\bigg({}_{c}^{RL}I_{f(t)}^{\beta(.,.)} g(f(t))\bigg)=\bigg({}_{a}^{RL}I_{t}^{\alpha(.,.)}(g\circ f)(t)\bigg)\bigg({}_{c}^{RL}I_{f(t)}^{\beta(.,.)}(1)\bigg)
\end{equation*}

this implies 

\begin{equation*}
\bigg({}_{a}^{RL}I_{t}^{\alpha(.,.)}(g\circ f)(t)\bigg)
=\bigg({}_{c}^{RL}I_{f(t)}^{\beta(.,.)} g(f(t))\bigg) \frac{\bigg( {}_{a}^{RL}I_{t}^{\alpha(.,.)}(1)\bigg)}{\bigg({}_{c}^{RL}I_{f(t)}^{\beta(.,.)}(1)\bigg)}
\end{equation*}
\end{proof}

From Theorem~(\ref{4.15}), we established the following corollary~(\ref{c1.0}), corollary~(\ref{c1.1}), and corollary~(\ref{c1.2}).

\begin{corollary}[\bf Chain rule type-II]
\label{c1.0}
Let $\alpha: [a,b] \times [a,b]\longrightarrow (0,\infty)$, $a,c\in \mathbb{R}$, $t>a$, $g(f)=(g\circ f)(x)$, where $f:=f(x)$ and for $f(t)>c$. Then we have,
\begin{equation}
\label{4.17}
\bigg({}_{a}^{RL}I_{t}^{\alpha(.,.)}(g\circ f)(t)\bigg)
=\bigg({}_{c}^{RL}I_{f(t)}^{\alpha(.,.)} g(f(t))\bigg) \frac{\bigg( {}_{a}^{RL}I_{t}^{\alpha(.,.)}(1)\bigg)}{\bigg({}_{c}^{RL}I_{f(t)}^{\alpha(.,.)}(1)\bigg)}
\end{equation}
\end{corollary}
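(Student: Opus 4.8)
The plan is to derive Corollary~\ref{c1.0} directly from Theorem~\ref{4.15} by specialising the two order functions. Theorem~\ref{4.15} is stated for a pair of functions $\alpha,\beta:[a,b]\times[a,b]\to(0,\infty)$ and gives
\[
\bigg({}_{a}^{RL}I_{t}^{\alpha(.,.)}(g\circ f)(t)\bigg)
=\bigg({}_{c}^{RL}I_{f(t)}^{\beta(.,.)} g(f(t))\bigg)\,
\frac{\bigg({}_{a}^{RL}I_{t}^{\alpha(.,.)}(1)\bigg)}{\bigg({}_{c}^{RL}I_{f(t)}^{\beta(.,.)}(1)\bigg)}.
\]
The statement of Corollary~\ref{c1.0} is the same identity with $\beta$ replaced throughout by $\alpha$. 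So the one and only step is to set $\beta(.,.)=\alpha(.,.)$ in equation~\eqref{4.12}; the hypotheses of the corollary ($\alpha:[a,b]\times[a,b]\to(0,\infty)$, $a,c\in\mathbb{R}$, $t>a$, $g(f)=(g\circ f)(x)$ with $f:=f(x)$, and $f(t)>c$) are exactly those of the theorem under this identification, so the substitution is legitimate and equation~\eqref{4.17} follows immediately.

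First I would invoke Theorem~\ref{4.15} with the given $f$, $g$, $a$, $c$, $t$. Then I would note that the theorem holds for an arbitrary admissible second order function $\beta$, and in particular for the choice $\beta(.,.)=\alpha(.,.)$, which is admissible because $\alpha$ already maps into $(0,\infty)$. Making this substitution in~\eqref{4.12} turns every ${}_{c}^{RL}I_{f(t)}^{\beta(.,.)}$ into ${}_{c}^{RL}I_{f(t)}^{\alpha(.,.)}$ and leaves the ${}_{a}^{RL}I_{t}^{\alpha(.,.)}$ factors untouched, yielding precisely the right-hand side of~\eqref{4.17}. That completes the proof.

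There is essentially no obstacle here: the corollary is a pure specialisation and the only thing to check is that the denominator ${}_{c}^{RL}I_{f(t)}^{\alpha(.,.)}(1)$ is nonzero so that the division is meaningful, which is the same implicit nondegeneracy assumption already present in Theorem~\ref{4.15} (inherited from the use of Product rule type-III, where $({}_{a}^{RL}I_{t}^{\alpha(.,.)}(1))^{-1}$ appears). Hence the formal proof reduces to the single sentence: ``From Theorem~\ref{4.15}, equation~\eqref{4.12}, letting $\beta(.,.)=\alpha(.,.)$ completes the proof.''

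\begin{proof}
From Theorem~\ref{4.15}, equation~\eqref{4.12}, letting $\beta(.,.)=\alpha(.,.)$ completes the proof.
\end{proof}
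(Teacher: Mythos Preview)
Your proof is correct and coincides with the paper's own argument: the paper's proof is the single line ``From Theorem~(\ref{4.15}), equation~(\ref{4.12}). Letting $\alpha=\beta$ completes the proof.'' which is exactly your specialisation $\beta(.,.)=\alpha(.,.)$.
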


\begin{proof}
From Theorem~(\ref{4.15}), equation~(\ref{4.12}). Letting $\alpha=\beta$ completes the proof.
\end{proof}

\begin{corollary}[\bf Chain rule type-III]
\label{c1.1}
Let $\alpha: [a,b] \times [a,b]\longrightarrow (0,\infty)$, $a\in \mathbb{R}$, $t>a$, $g(f)=(g\circ f)(x)$, where $f:=f(x)$ and for $f(t)>c$. Then we have,
\begin{equation}
\label{4.17}
\bigg({}_{a}^{RL}I_{t}^{\alpha(.,.)}(g\circ f)(t)\bigg)
=\bigg({}_{a}^{RL}I_{f(t)}^{\alpha(.,.)} g(f(t))\bigg) \frac{\bigg( {}_{a}^{RL}I_{t}^{\alpha(.,.)}(1)\bigg)}{\bigg({}_{a}^{RL}I_{f(t)}^{\alpha(.,.)}(1)\bigg)}
\end{equation}
\end{corollary}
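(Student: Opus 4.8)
The plan is to derive Chain rule type-III as a direct specialization of Theorem~(\ref{4.15}), exactly mirroring how Chain rule type-II was obtained. Recall that equation~(\ref{4.12}) reads
\begin{equation*}
\bigg({}_{a}^{RL}I_{t}^{\alpha(.,.)}(g\circ f)(t)\bigg)
=\bigg({}_{c}^{RL}I_{f(t)}^{\beta(.,.)} g(f(t))\bigg) \frac{\bigg( {}_{a}^{RL}I_{t}^{\alpha(.,.)}(1)\bigg)}{\bigg({}_{c}^{RL}I_{f(t)}^{\beta(.,.)}(1)\bigg)},
\end{equation*}
valid for arbitrary order functions $\alpha,\beta$ and arbitrary lower limits $a,c$. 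First I would set $\beta(.,.)=\alpha(.,.)$, which collapses the two order functions into one; this already yields Chain rule type-II, equation~(\ref{4.17}) of corollary~(\ref{c1.0}).

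Next I would additionally set $c=a$, so that the inner integral operator on the right-hand side has the same lower limit as the outer one. Substituting $\beta=\alpha$ and $c=a$ into equation~(\ref{4.12}) turns every occurrence of ${}_{c}^{RL}I_{f(t)}^{\beta(.,.)}$ into ${}_{a}^{RL}I_{f(t)}^{\alpha(.,.)}$, producing precisely
\begin{equation*}
\bigg({}_{a}^{RL}I_{t}^{\alpha(.,.)}(g\circ f)(t)\bigg)
=\bigg({}_{a}^{RL}I_{f(t)}^{\alpha(.,.)} g(f(t))\bigg) \frac{\bigg( {}_{a}^{RL}I_{t}^{\alpha(.,.)}(1)\bigg)}{\bigg({}_{a}^{RL}I_{f(t)}^{\alpha(.,.)}(1)\bigg)},
\end{equation*}
which is the claimed identity. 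So the entire proof is the single line: from Theorem~(\ref{4.15}), equation~(\ref{4.12}), letting $\alpha=\beta$ and $c=a$ completes the proof.

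There is essentially no obstacle here; the only point requiring a word of care is the hypothesis bookkeeping. The statement of corollary~(\ref{c1.1}) still mentions the condition $f(t)>c$ even though $c$ no longer appears in the formula after the substitution $c=a$; one should read this as $f(t)>a$, consistent with the requirement that the lower limit of the ${}_{a}^{RL}I_{f(t)}^{\alpha(.,.)}$ operator lie below its upper limit $f(t)$. Likewise one inherits from Theorem~(\ref{4.15}) the implicit requirement that ${}_{a}^{RL}I_{f(t)}^{\alpha(.,.)}(1)\neq 0$ so that the quotient on the right-hand side is well defined. Granting these, the corollary follows immediately by specialization, with no new computation needed.
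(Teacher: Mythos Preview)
Your proposal is correct and matches the paper's own proof exactly: the paper also simply invokes Theorem~(\ref{4.15}), equation~(\ref{4.12}), and lets $\alpha=\beta$ and $a=c$. Your additional remarks on reading $f(t)>c$ as $f(t)>a$ and on the nonvanishing of the denominator are reasonable clarifications that the paper does not spell out.
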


\begin{proof}
From Theorem~(\ref{4.15}), equation~(\ref{4.12}). Letting $\alpha=\beta$ and $a=c$ completes the proof.
\end{proof}

\begin{corollary}[\bf Chain rule type-IV]
\label{c1.2}
Let $\alpha: [a,b] \times [a,b]\longrightarrow (0,\infty)$, $a\in \mathbb{R}$, $t>a$, $g(f)=(g\circ f)(x)$, where $f:=f(x)$ and for $f(t)>c$. Then we have,
\begin{equation}
\label{4.18}
\bigg({}_{a}^{RL}I_{t}^{\alpha(.,.)}(f\circ f)(t)\bigg)
=\bigg({}_{a}^{RL}I_{f(t)}^{\alpha(.,.)} f(f(t))\bigg) \frac{\bigg( {}_{a}^{RL}I_{t}^{\alpha(.,.)}(1)\bigg)}{\bigg({}_{a}^{RL}I_{f(t)}^{\alpha(.,.)}(1)\bigg)}
\end{equation}
\end{corollary}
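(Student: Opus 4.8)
The plan is to obtain Chain rule type-IV as the immediate specialization of Chain rule type-III (Corollary~(\ref{c1.1})) in which the outer function is taken equal to the inner function. Recall that Corollary~(\ref{c1.1}) asserts, for arbitrary composable $f$ and $g$ under the stated hypotheses on $\alpha$, $a$, $t$, the identity $\big({}_{a}^{RL}I_{t}^{\alpha(.,.)}(g\circ f)(t)\big)=\big({}_{a}^{RL}I_{f(t)}^{\alpha(.,.)} g(f(t))\big)\,\big({}_{a}^{RL}I_{t}^{\alpha(.,.)}(1)\big)\big/\big({}_{a}^{RL}I_{f(t)}^{\alpha(.,.)}(1)\big)$.

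First I would invoke this identity and then put $g:=f$ throughout it. On the left-hand side ${}_{a}^{RL}I_{t}^{\alpha(.,.)}(g\circ f)(t)$ becomes ${}_{a}^{RL}I_{t}^{\alpha(.,.)}(f\circ f)(t)$; on the right-hand side the factor ${}_{a}^{RL}I_{f(t)}^{\alpha(.,.)}g(f(t))$ becomes ${}_{a}^{RL}I_{f(t)}^{\alpha(.,.)}f(f(t))$, while the ratio $\big({}_{a}^{RL}I_{t}^{\alpha(.,.)}(1)\big)\big/\big({}_{a}^{RL}I_{f(t)}^{\alpha(.,.)}(1)\big)$ involves neither $f$ nor $g$ and is left unchanged. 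This reproduces equation~(\ref{4.18}) verbatim, which completes the proof. I expect no real obstacle: the only point to verify is that the substitution $g:=f$ is legitimate, and it is, because Corollary~(\ref{c1.1}) imposes no condition on $g$ other than that $g\circ f$ be defined and that $f(t)>c$, both of which are retained among the hypotheses of Corollary~(\ref{c1.2}).

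If a self-contained derivation is preferred, the same conclusion follows by repeating Method~II of the proof of Theorem~(\ref{4.15}) with $\beta=\alpha$, $c=a$, and $g$ replaced by $f$: from $f(f)=(f\circ f)(x)$ one multiplies by $(t-x)^{\alpha(t,x)-1}/\Gamma(\alpha(t,x))$ and integrates in $x$ over $[a,t]$ to get $f(f)\big({}_{a}^{RL}I_{t}^{\alpha(.,.)}(1)\big)={}_{a}^{RL}I_{t}^{\alpha(.,.)}(f\circ f)(t)$, then multiplies by $(f(t)-f(x))^{\alpha(f(t),f(x))-1}/\Gamma(\alpha(f(t),f(x)))$, integrates in $f(x)$ over $[a,f(t)]$, and rearranges. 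Either route is routine; the one-line specialization of Corollary~(\ref{c1.1}) is the cleanest.
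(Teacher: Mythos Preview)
Your proposal is correct and essentially matches the paper's argument: the paper obtains the result from Theorem~(\ref{4.15}) by setting $\alpha=\beta$, $a=c$, and $f=g$, which is exactly your specialization of Corollary~(\ref{c1.1}) (itself Theorem~(\ref{4.15}) with $\alpha=\beta$, $a=c$) by putting $g:=f$. Your optional self-contained rerun of Method~II is likewise faithful to the paper's derivation.
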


\begin{proof}
From Theorem~(\ref{4.15}), equation~(\ref{4.12}). Letting $\alpha=\beta$, $a=c$ and $f=g$ completes the proof.
\end{proof}

\justify
In the following Theorem~(\ref{4.19}), equation~(\ref{4.23}) mentions the relationship   between variable-order Riemann-Liouville integrals of addition, subtraction and product of two functions with respect to two different variables beautifully. The consequences of this theorem becomes more beautifull.
\begin{theorem}
\label{4.19}
Let $\alpha,\beta : [a,b] \times [a,b]\longrightarrow (0,\infty)$, $a,c\in \mathbb{R}$, $t>a,s>c$. Then for functions $f$ and $g$: 

\begin{equation}
\label{4.23}
\begin{split}
&{}_{c}^{RL}I_{s}^{\beta(.,.)}\bigg({}_{a}^{RL}I_{t}^{\alpha(.,.)}(f(t)-f(s))(g(t)-g(s))\bigg) \\&=\bigg({}_{a}^{RL}I_{t}^{\alpha(.,.)}(f(t)g(t))\bigg)\bigg({}_{c}^{RL}I_{s}^{\beta(.,.)}(1)\bigg)+\bigg({}_{a}^{RL}I_{t}^{\alpha(.,.)}(1)\bigg)\bigg({}_{c}^{RL}I_{s}^{\beta(.,.)}f(s)g(s)\bigg)
\\&\quad+\frac{1}{2}\bigg[\bigg({}_{a}^{RL}I_{t}^{\alpha(.,.)}(f(t)-g(t))\bigg)\bigg({}_{c}^{RL}I_{s}^{\beta(.,.)}(f(s)-g(s))\bigg)
\\&\quad-\bigg({}_{a}^{RL}I_{t}^{\alpha(.,.)}(f(t)+g(t))\bigg)\bigg({}_{c}^{RL}I_{s}^{\beta(.,.)}(f(s)+g(s)) \bigg)
\bigg]
\end{split}
\end{equation}
\end{theorem}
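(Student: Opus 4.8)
The plan is to obtain equation~(\ref{4.23}) as an almost immediate consequence of Theorem~(\ref{4.4}) together with linearity of the operator. First I would take equation~(\ref{4.3}) and move the two product terms on its right--hand side over to the left, isolating the iterated integral:
\begin{equation*}
\begin{split}
{}_{c}^{RL}I_{s}^{\beta(.,.)}\bigg({}_{a}^{RL}I_{t}^{\alpha(.,.)}(f(t)-f(s))(g(t)-g(s))\bigg)
&=\bigg({}_{a}^{RL}I_{t}^{\alpha(.,.)}(fg)(t)\bigg)\bigg({}_{c}^{RL}I_{s}^{\beta(.,.)}(1)\bigg)\\
&\quad+\bigg({}_{a}^{RL}I_{t}^{\alpha(.,.)}(1)\bigg)\bigg({}_{c}^{RL}I_{s}^{\beta(.,.)}(fg)(s)\bigg)\\
&\quad-\bigg({}_{a}^{RL}I_{t}^{\alpha(.,.)}g(t)\bigg)\bigg({}_{c}^{RL}I_{s}^{\beta(.,.)}f(s)\bigg)\\
&\quad-\bigg({}_{a}^{RL}I_{t}^{\alpha(.,.)}f(t)\bigg)\bigg({}_{c}^{RL}I_{s}^{\beta(.,.)}g(s)\bigg).
\end{split}
\end{equation*}
Since $(fg)(t)=f(t)g(t)$ and $(fg)(s)=f(s)g(s)$, the first two terms here already coincide with the opening line of~(\ref{4.23}), so the whole task reduces to rewriting the two remaining cross terms.

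For that step I would invoke the (routine, left--to--the--reader) linearity of the variable--order Riemann--Liouville integral, namely ${}_{a}^{RL}I_{t}^{\alpha(.,.)}(f(t)\pm g(t))={}_{a}^{RL}I_{t}^{\alpha(.,.)}f(t)\pm{}_{a}^{RL}I_{t}^{\alpha(.,.)}g(t)$ and similarly for the $\beta$--integral in the variable $s$. Abbreviating $F_t={}_{a}^{RL}I_{t}^{\alpha(.,.)}f(t)$, $G_t={}_{a}^{RL}I_{t}^{\alpha(.,.)}g(t)$, $F_s={}_{c}^{RL}I_{s}^{\beta(.,.)}f(s)$, $G_s={}_{c}^{RL}I_{s}^{\beta(.,.)}g(s)$, one checks the elementary polarisation identity
\begin{equation*}
\tfrac{1}{2}\Big[(F_t-G_t)(F_s-G_s)-(F_t+G_t)(F_s+G_s)\Big]=-F_tG_s-G_tF_s,
\end{equation*}
whose right side is exactly the pair of cross terms produced above. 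Substituting this back into the displayed identity reproduces~(\ref{4.23}) verbatim and finishes the argument.

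There is also a self--contained route that avoids quoting Theorem~(\ref{4.4}): expand $(f(t)-f(s))(g(t)-g(s))=f(t)g(t)-g(s)f(t)-f(s)g(t)+f(s)g(s)$, apply ${}_{a}^{RL}I_{t}^{\alpha(.,.)}$ holding $f(s),g(s)$ fixed (they are constants in $t$), then apply ${}_{c}^{RL}I_{s}^{\beta(.,.)}$ holding the resulting $t$--integrals fixed (they are constants in $s$), and collect terms; the polarisation identity is then applied as before. I do not expect a genuine obstacle in either approach --- the only delicate point, and the one worth stating explicitly, is the bookkeeping of which quantities are constant with respect to which of the two integration variables when the outer $\beta$--integral acts, so that the $t$--integral factors may be pulled through it; the hypotheses $t>a$, $s>c$ and the positivity of $\alpha$ and $\beta$ guarantee that every integral written down is well defined.
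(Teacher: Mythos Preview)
Your proposal is correct. Your alternative ``self-contained route'' is essentially the paper's own proof: the paper begins from the pointwise identity
\[
(f(t)-f(s))(g(t)-g(s))=f(t)g(t)+f(s)g(s)+\tfrac{1}{2}\big[(f(t)-g(t))(f(s)-g(s))-(f(t)+g(t))(f(s)+g(s))\big],
\]
and then applies ${}_{a}^{RL}I_{t}^{\alpha(.,.)}$ followed by ${}_{c}^{RL}I_{s}^{\beta(.,.)}$, using linearity at each stage. In other words, the paper applies the polarisation identity \emph{before} integrating, at the level of functions, whereas your second route applies it \emph{after} integrating, at the level of the integrals $F_t,G_t,F_s,G_s$; by linearity these are the same computation in a different order. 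Your primary route via Theorem~(\ref{4.4}) is a genuinely different (and slightly more economical) argument: rather than reproving the double integration from scratch, you quote equation~(\ref{4.3}) and simply rearrange, so the only new input is the one-line polarisation check. What this buys you is reuse of an already-established result; what the paper's direct approach buys is a proof that stands on its own without reference to Theorem~(\ref{4.4}).
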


\begin{proof}
Since,
\begin{equation}
\label{4.20}
\begin{split}
&(f(t)-f(s))(g(t)-g(s))\\
&\quad=f(t)g(t)+f(s)g(s)
\\&\qquad+\frac{1}{2}\bigg[\bigg(f(t)-g(t)\bigg)\bigg(f(s)-g(s)\bigg)\\
&\qquad-\bigg(f(t)+g(t)\bigg)\bigg(f(s)+g(s)\bigg)
\bigg]
\end{split}
\end{equation}
applying the operator ${}_{a}^{RL}I_{t}^{\alpha(.,.)}$ on equation~(\ref{4.20}) and use linearity property, we have,
\begin{equation}
\label{4.21}
\begin{split}
&{}_{a}^{RL}I_{t}^{\alpha(.,.)}\bigg((f(t)-f(s))(g(t)-g(s))\bigg)\\
&\quad={}_{a}^{RL}I_{t}^{\alpha(.,.)}\bigg(f(t)g(t)\bigg)+{}_{a}^{RL}I_{t}^{\alpha(.,.)}\bigg(f(s)g(s)\bigg)
\\&\qquad+\frac{1}{2}\bigg[{}_{a}^{RL}I_{t}^{\alpha(.,.)}\bigg(f(t)-g(t)\bigg)\bigg(f(s)-g(s)\bigg)\\
&\qquad-{}_{a}^{RL}I_{t}^{\alpha(.,.)}\bigg(f(t)+g(t)\bigg)\bigg(f(s)+g(s)\bigg)
\bigg]
\end{split}
\end{equation}
which means using linearity property,
\begin{equation}
\label{4.22}
\begin{split}
&{}_{a}^{RL}I_{t}^{\alpha(.,.)}\bigg((f(t)-f(s))(g(t)-g(s))\bigg)\\
&\quad={}_{a}^{RL}I_{t}^{\alpha(.,.)}(f(t)g(t))+f(s)g(s)\bigg({}_{a}^{RL}I_{t}^{\alpha(.,.)}(1)\bigg)
\\&\qquad+\frac{1}{2}\bigg[(f(s)-g(s))\bigg({}_{a}^{RL}I_{t}^{\alpha(.,.)}(f(t)-g(t))\bigg)\\
&\qquad-(f(s)+g(s))\bigg({}_{a}^{RL}I_{t}^{\alpha(.,.)}(f(t)+g(t))\bigg)
\bigg]
\end{split}
\end{equation}
applying the operator ${}_{c}^{RL}I_{s}^{\beta(.,.)}$ on equation~(\ref{4.22}) and use linearity property, we get,
\begin{equation*}
\begin{split}
&{}_{c}^{RL}I_{s}^{\beta(.,.)}\bigg({}_{a}^{RL}I_{t}^{\alpha(.,.)}(f(t)-f(s))(g(t)-g(s))\bigg) \\&={}_{c}^{RL}I_{s}^{\beta(.,.)}\bigg({}_{a}^{RL}I_{t}^{\alpha(.,.)}(f(t)g(t))\bigg)+\bigg({}_{c}^{RL}I_{s}^{\beta(.,.)}f(s)g(s)\bigg({}_{a}^{RL}I_{t}^{\alpha(.,.)}(1)\bigg)\bigg)
\\&\quad+\frac{1}{2}\bigg[\bigg({}_{c}^{RL}I_{s}^{\beta(.,.)}(f(s)-g(s))\bigg({}_{a}^{RL}I_{t}^{\alpha(.,.)}(f(t)-g(t))\bigg)\bigg)\\
&\quad-\bigg({}_{c}^{RL}I_{s}^{\beta(.,.)}(f(s)+g(s))\bigg({}_{a}^{RL}I_{t}^{\alpha(.,.)}(f(t)+g(t))\bigg) \bigg)
\bigg]
\end{split}
\end{equation*}
which means
\begin{equation*}
\begin{split}
&{}_{c}^{RL}I_{s}^{\beta(.,.)}\bigg({}_{a}^{RL}I_{t}^{\alpha(.,.)}(f(t)-f(s))(g(t)-g(s))\bigg) \\&=\bigg({}_{a}^{RL}I_{t}^{\alpha(.,.)}(f(t)g(t))\bigg)\bigg({}_{c}^{RL}I_{s}^{\beta(.,.)}(1)\bigg)+\bigg({}_{a}^{RL}I_{t}^{\alpha(.,.)}(1)\bigg)\bigg({}_{c}^{RL}I_{s}^{\beta(.,.)}f(s)g(s)\bigg)
\\&\quad+\frac{1}{2}\bigg[\bigg({}_{a}^{RL}I_{t}^{\alpha(.,.)}(f(t)-g(t))\bigg)\bigg({}_{c}^{RL}I_{s}^{\beta(.,.)}(f(s)-g(s))\bigg)
\\&\quad-\bigg({}_{a}^{RL}I_{t}^{\alpha(.,.)}(f(t)+g(t))\bigg)\bigg({}_{c}^{RL}I_{s}^{\beta(.,.)}(f(s)+g(s)) \bigg)
\bigg]
\end{split}
\end{equation*}
\end{proof}

\begin{corollary}
Let $\alpha,\beta : [a,b] \times [a,b]\longrightarrow (0,\infty)$, $a,c\in \mathbb{R}$, $t>a,s>c$. Then for functions $f$ and $g$ the following equality holds
\begin{equation}
\label{4.24}
\begin{split}
&\bigg({}_{a}^{RL}I_{t}^{\alpha(.,.)}(1)\bigg)^{-1}\bigg({}_{c}^{RL}I_{s}^{\beta(.,.)}(1)\bigg)^{-1}\bigg({}_{c}^{RL}I_{s}^{\beta(.,.)}\bigg({}_{a}^{RL}I_{t}^{\alpha(.,.)}(f(t)-f(s))\bigg)\bigg)^2 
\\&=\bigg({}_{a}^{RL}I_{t}^{\alpha(.,.)}f(t)\bigg)^2\bigg({}_{a}^{RL}I_{t}^{\alpha(.,.)}(1)\bigg)^{-1}\bigg({}_{c}^{RL}I_{s}^{\beta(.,.)}(1)\bigg)\\
&\quad+\bigg({}_{a}^{RL}I_{t}^{\alpha(.,.)}(1)\bigg)\bigg({}_{c}^{RL}I_{s}^{\beta(.,.)}f(s)\bigg)^2\bigg({}_{c}^{RL}I_{s}^{\beta(.,.)}(1)\bigg)^{-1}\\
&\quad-2\bigg({}_{a}^{RL}I_{t}^{\alpha(.,.)}f(t)\bigg)\bigg({}_{c}^{RL}I_{s}^{\beta(.,.)}f(s) \bigg)
\end{split}
\end{equation}
\end{corollary}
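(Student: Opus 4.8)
The plan is to recognise equation~(\ref{4.24}) as the $g=f$ specialization of Theorem~(\ref{4.19}), rewritten so that every ``square sitting under a single Riemann--Liouville integral'' is eliminated by Product rule type-IV (Corollary~(\ref{c5})). No new integration is required: the entire argument is algebraic manipulation of the two identities already established, equation~(\ref{4.23}) and equation~(\ref{4.11}).

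First I would set $g=f$ in equation~(\ref{4.23}). On the right-hand side the term $\frac{1}{2}\big({}_{a}^{RL}I_{t}^{\alpha(.,.)}(f(t)-g(t))\big)\big({}_{c}^{RL}I_{s}^{\beta(.,.)}(f(s)-g(s))\big)$ vanishes, since $f-g\equiv 0$ and ${}_{a}^{RL}I_{t}^{\alpha(.,.)}(0)=0$; the term $-\frac{1}{2}\big({}_{a}^{RL}I_{t}^{\alpha(.,.)}(f(t)+g(t))\big)\big({}_{c}^{RL}I_{s}^{\beta(.,.)}(f(s)+g(s))\big)$ collapses, by linearity, to $-2\big({}_{a}^{RL}I_{t}^{\alpha(.,.)}f(t)\big)\big({}_{c}^{RL}I_{s}^{\beta(.,.)}f(s)\big)$; and the first two terms become $\big({}_{a}^{RL}I_{t}^{\alpha(.,.)}f^2(t)\big)\big({}_{c}^{RL}I_{s}^{\beta(.,.)}(1)\big)$ and $\big({}_{a}^{RL}I_{t}^{\alpha(.,.)}(1)\big)\big({}_{c}^{RL}I_{s}^{\beta(.,.)}f^2(s)\big)$. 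Hence Theorem~(\ref{4.19}) reduces to
\begin{align*}
{}_{c}^{RL}I_{s}^{\beta(.,.)}\Big({}_{a}^{RL}I_{t}^{\alpha(.,.)}(f(t)-f(s))^2\Big)
&=\big({}_{a}^{RL}I_{t}^{\alpha(.,.)}f^2(t)\big)\big({}_{c}^{RL}I_{s}^{\beta(.,.)}(1)\big)\\
&\quad+\big({}_{a}^{RL}I_{t}^{\alpha(.,.)}(1)\big)\big({}_{c}^{RL}I_{s}^{\beta(.,.)}f^2(s)\big)\\
&\quad-2\big({}_{a}^{RL}I_{t}^{\alpha(.,.)}f(t)\big)\big({}_{c}^{RL}I_{s}^{\beta(.,.)}f(s)\big).
\end{align*}

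Next I would rewrite the two ``square'' factors on the right using Corollary~(\ref{c5}): ${}_{a}^{RL}I_{t}^{\alpha(.,.)}f^2(t)=\big({}_{a}^{RL}I_{t}^{\alpha(.,.)}(1)\big)^{-1}\big({}_{a}^{RL}I_{t}^{\alpha(.,.)}f(t)\big)^2$, and, applying the same corollary with $(a,t,\alpha)$ replaced by $(c,s,\beta)$, ${}_{c}^{RL}I_{s}^{\beta(.,.)}f^2(s)=\big({}_{c}^{RL}I_{s}^{\beta(.,.)}(1)\big)^{-1}\big({}_{c}^{RL}I_{s}^{\beta(.,.)}f(s)\big)^2$; substituting these makes the right-hand side above coincide with the right-hand side of~(\ref{4.24}). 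For the left-hand side, I would freeze $s$ and apply Corollary~(\ref{c5}) to the inner $t$-integral, giving ${}_{a}^{RL}I_{t}^{\alpha(.,.)}(f(t)-f(s))^2=\big({}_{a}^{RL}I_{t}^{\alpha(.,.)}(1)\big)^{-1}\big({}_{a}^{RL}I_{t}^{\alpha(.,.)}(f(t)-f(s))\big)^2$. Since ${}_{a}^{RL}I_{t}^{\alpha(.,.)}(1)$ depends on $t$ only, it is constant for the outer ${}_{c}^{RL}I_{s}^{\beta(.,.)}$ integration and pulls out; one more use of Corollary~(\ref{c5}), now applied to the $s$-integral of the square of the map $s\mapsto{}_{a}^{RL}I_{t}^{\alpha(.,.)}(f(t)-f(s))$, yields
\begin{align*}
{}_{c}^{RL}I_{s}^{\beta(.,.)}\Big({}_{a}^{RL}I_{t}^{\alpha(.,.)}(f(t)-f(s))^2\Big)
&=\big({}_{a}^{RL}I_{t}^{\alpha(.,.)}(1)\big)^{-1}\big({}_{c}^{RL}I_{s}^{\beta(.,.)}(1)\big)^{-1}\\
&\quad\times\Big({}_{c}^{RL}I_{s}^{\beta(.,.)}\big({}_{a}^{RL}I_{t}^{\alpha(.,.)}(f(t)-f(s))\big)\Big)^2,
\end{align*}
which is exactly the left-hand side of~(\ref{4.24}). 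Chaining the three displayed identities finishes the proof.

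The only delicate point, and hence the main obstacle, is the bookkeeping in this last step: one has to be sure that Corollary~(\ref{c5}), proved for a function of a single variable, may be applied legitimately first with $s$ held fixed (to the $t$-integral) and then with $t$ held fixed (to the $s$-integral), and that the scalar ${}_{a}^{RL}I_{t}^{\alpha(.,.)}(1)$ genuinely passes through ${}_{c}^{RL}I_{s}^{\beta(.,.)}$. Both facts are immediate from definition~(\ref{2.2}): the variable $s$ never enters the kernel of ${}_{a}^{RL}I_{t}^{\alpha(.,.)}$ and the variable $t$ never enters the kernel of ${}_{c}^{RL}I_{s}^{\beta(.,.)}$, so no convergence or interchange issue arises beyond those already tacitly assumed in Theorems~(\ref{4.4}) and~(\ref{4.19}).
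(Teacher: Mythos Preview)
Your proposal is correct and follows exactly the route indicated in the paper: set $g=f$ in equation~(\ref{4.23}) and then apply Product rule type-IV (Corollary~(\ref{c5})) to each of the resulting squares, both on the right-hand side and inside the nested integral on the left. The paper states this in a single line, while you have carefully spelled out the intermediate simplifications and the two successive applications of Corollary~(\ref{c5}) to the left-hand side, but the underlying argument is identical.
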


\begin{proof}
From equation~(\ref{4.23}), let $f=g$ and use product rule type-IV.
\end{proof}
\begin{corollary}
Let $\alpha,\beta : [a,b] \times [a,b]\longrightarrow (0,\infty)$, $a,c\in \mathbb{R}$, $t>a,s>c$. Then for functions $f$ and $g$ the following equality holds

\begin{equation}
\label{4.25}
\begin{split}
&\bigg({}_{a}^{RL}I_{t}^{\alpha(.,.)}(f(t)g(t))\bigg)\bigg({}_{c}^{RL}I_{t}^{\beta(.,.)}(1)\bigg)+\bigg({}_{a}^{RL}I_{t}^{\alpha(.,.)}(1)\bigg)\bigg({}_{c}^{RL}I_{t}^{\beta(.,.)}f(t)g(t)\bigg)
\\&\quad+\frac{1}{2}\bigg[\bigg({}_{a}^{RL}I_{t}^{\alpha(.,.)}(f(t)-g(t))\bigg)\bigg({}_{c}^{RL}I_{t}^{\beta(.,.)}(f(t)-g(t))\bigg)
\\&\quad-\bigg({}_{a}^{RL}I_{t}^{\alpha(.,.)}(f(t)+g(t))\bigg)\bigg({}_{c}^{RL}I_{t}^{\beta(.,.)}(f(t)+g(t)) \bigg)
\bigg]=0
\end{split}
\end{equation}
\end{corollary}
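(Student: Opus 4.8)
The plan is to obtain equation~(\ref{4.25}) as the $s=t$ specialization of Theorem~(\ref{4.19}). First I would take equation~(\ref{4.23}) verbatim, with its two independent variables $t$ and $s$ and lower limits $a$ and $c$, and then impose $s=t$. This substitution is legitimate: the hypotheses already guarantee $t>a$, and after the identification the only additional requirement is $t>c$, which is among the domain conditions listed in the corollary; moreover, replacing every ${}_{c}^{RL}I_{s}^{\beta(.,.)}$ by ${}_{c}^{RL}I_{t}^{\beta(.,.)}$ and every $f(s),g(s)$ by $f(t),g(t)$ on the right-hand side of~(\ref{4.23}) produces exactly the expression appearing in~(\ref{4.25}).

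The crucial step is to check that the left-hand side of~(\ref{4.23}) collapses to zero under $s=t$. Indeed, the argument of the inner operator becomes $(f(t)-f(t))(g(t)-g(t))$, the zero function; hence ${}_{a}^{RL}I_{t}^{\alpha(.,.)}$ of it is the Riemann-Liouville variable-order integral of $0$, namely $0$, and a further application of ${}_{c}^{RL}I_{t}^{\beta(.,.)}$ to $0$ leaves $0$. This is precisely the cancellation already used to pass from Theorem~(\ref{4.4}) to Product rule type-I, so nothing new is needed here. Consequently the entire left-hand side of~(\ref{4.23}) vanishes, and equating its right-hand side (in the substituted form just described) to $0$ gives exactly equation~(\ref{4.25}).

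I do not expect a genuine obstacle. The only points deserving a sentence of justification are the compatibility of the substitution $s=t$ with the stated domain conditions and the elementary fact that the variable-order integral of the zero function is zero, both immediate from Definition~(\ref{2.2}). As a consistency check one may instead expand the $\frac{1}{2}[\,\cdot\,]$ bracket in~(\ref{4.25}) by linearity: its cross terms combine to $-\big({}_{a}^{RL}I_{t}^{\alpha(.,.)}f(t)\big)\big({}_{c}^{RL}I_{t}^{\beta(.,.)}g(t)\big)-\big({}_{a}^{RL}I_{t}^{\alpha(.,.)}g(t)\big)\big({}_{c}^{RL}I_{t}^{\beta(.,.)}f(t)\big)$, which exhibits equation~(\ref{4.25}) as equivalent to Product rule type-I, equation~(\ref{4.8}) --- a second, independent route to the same identity.
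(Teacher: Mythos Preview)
Your proposal is correct and follows exactly the paper's own argument: the paper's proof is the single line ``From equation~(\ref{4.23}), letting $s=t$ completes the proof,'' and you have spelled out precisely this substitution together with the reason the left-hand side vanishes. The additional consistency check via Product rule type-I is a nice extra but not needed.
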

\begin{proof}
From equation~(\ref{4.23}), letting $s=t$ completes the proof.
\end{proof}
\justify
The next Theorem~(\ref{4.26}) will show us how to operate with Riemann-Liouville variable-order fractional integral operator of the product of two functions with two-variable.
\begin{theorem}
\label{4.26}
Let $\alpha,\beta : [a,b] \times [a,b]\longrightarrow (0,\infty)$, $a,c\in \mathbb{R}$, $t>a,s>c$. Then for functions $F$ and $G$ the following equality holds
\begin{equation}
\label{4.27}
\begin{split}
&{}_{a}^{RL}I_{t}^{\alpha(.,.)}{}_{c}^{RL}I_{s}^{\beta(.,.)}F(t,s)G(t,s)\\
&\quad=\bigg({}_{c}^{RL}I_{s}^{\beta(.,.)}(1)\bigg)^{-1}\bigg({}_{a}^{RL}I_{t}^{\alpha(.,.)}(1)\bigg)^{-1}\bigg({}_{a}^{RL}I_{t}^{\alpha(.,.)}\bigg({}_{c}^{RL}I_{s}^{\beta(.,.)}F(t,s)\bigg)\bigg)\\
&\quad \times \bigg({}_{a}^{RL}I_{t}^{\alpha(.,.)}\bigg({}_{c}^{RL}I_{s}^{\beta(.,.)}G(t,s)\bigg)\bigg)
\end{split}
\end{equation}
\end{theorem}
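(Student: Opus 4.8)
The plan is to obtain equation~(\ref{4.27}) by invoking Product rule type-III (Corollary~(\ref{c4}), equation~(\ref{4.10})) twice in succession: first with respect to the inner variable $s$, and then with respect to the outer variable $t$. The structural fact that makes this work is that, by the very definition~(\ref{2.2}), the operator ${}_{c}^{RL}I_{s}^{\beta(.,.)}$ is a one-dimensional integral acting only on the $s$-slot of a two-variable function, with $t$ held fixed as a parameter, and likewise ${}_{a}^{RL}I_{t}^{\alpha(.,.)}$ acts only on the $t$-slot; hence each operator treats the other variable as a constant and the one-variable corollaries apply slot by slot.

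First I would freeze $t$ and regard $F(t,\cdot)$ and $G(t,\cdot)$ as functions of $s$ alone. Product rule type-III in the variable $s$, applied to the product $F(t,s)G(t,s)$, gives
\[
{}_{c}^{RL}I_{s}^{\beta(.,.)}F(t,s)G(t,s)=\bigg({}_{c}^{RL}I_{s}^{\beta(.,.)}(1)\bigg)^{-1}\bigg({}_{c}^{RL}I_{s}^{\beta(.,.)}F(t,s)\bigg)\bigg({}_{c}^{RL}I_{s}^{\beta(.,.)}G(t,s)\bigg).
\]
Next I would apply ${}_{a}^{RL}I_{t}^{\alpha(.,.)}$ to both sides. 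On the right-hand side the factor $\big({}_{c}^{RL}I_{s}^{\beta(.,.)}(1)\big)^{-1}$ depends on $s$ only, so it is constant in $t$ and may be taken outside ${}_{a}^{RL}I_{t}^{\alpha(.,.)}$ by linearity. What then remains is ${}_{a}^{RL}I_{t}^{\alpha(.,.)}$ applied to the product of the two $t$-functions $u(t):={}_{c}^{RL}I_{s}^{\beta(.,.)}F(t,s)$ and $v(t):={}_{c}^{RL}I_{s}^{\beta(.,.)}G(t,s)$, now with $s$ the frozen parameter. A second application of Product rule type-III, this time in the variable $t$, rewrites ${}_{a}^{RL}I_{t}^{\alpha(.,.)}\big(u(t)v(t)\big)$ as $\big({}_{a}^{RL}I_{t}^{\alpha(.,.)}(1)\big)^{-1}\big({}_{a}^{RL}I_{t}^{\alpha(.,.)}u(t)\big)\big({}_{a}^{RL}I_{t}^{\alpha(.,.)}v(t)\big)$; unfolding $u$ and $v$ and using that scalar multiplication is commutative then yields precisely the right-hand side of equation~(\ref{4.27}).

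I do not expect a genuine obstacle: the argument is essentially bookkeeping on top of Corollary~(\ref{c4}). The only points deserving explicit mention are (i) the legitimacy of applying the one-variable Product rule type-III slot by slot, which is immediate from the one-dimensional form of definition~(\ref{2.2}); (ii) pulling the purely $s$-dependent factor $\big({}_{c}^{RL}I_{s}^{\beta(.,.)}(1)\big)^{-1}$ through ${}_{a}^{RL}I_{t}^{\alpha(.,.)}$, which is just the linearity property already used throughout the paper; and (iii) the standing assumption, as in Corollaries~(\ref{c4}) and~(\ref{c5}), that ${}_{a}^{RL}I_{t}^{\alpha(.,.)}(1)$ and ${}_{c}^{RL}I_{s}^{\beta(.,.)}(1)$ are nonzero, so that the indicated inverses are defined.
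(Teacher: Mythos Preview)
Your approach is correct and matches the paper's own proof essentially line for line: the paper also applies Product rule type-III first in the $s$-variable, pulls the $s$-only factor $\big({}_{c}^{RL}I_{s}^{\beta(.,.)}(1)\big)^{-1}$ through ${}_{a}^{RL}I_{t}^{\alpha(.,.)}$, and then applies Product rule type-III again in the $t$-variable. Your explicit remarks on the slot-by-slot legitimacy and the nonvanishing of the normalizing integrals are points the paper leaves implicit.
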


\begin{proof}Applying product rule type-III repeatedly, that is, 
\begin{equation*}
\begin{split}
&{}_{a}^{RL}I_{t}^{\alpha(.,.)}{}_{c}^{RL}I_{s}^{\beta(.,.)}F(t,s)G(t,s)\\
&\quad={}_{a}^{RL}I_{t}^{\alpha(.,.)}\bigg({}_{c}^{RL}I_{s}^{\beta(.,.)}F(t,s)G(t,s)\bigg)\\
&\quad={}_{a}^{RL}I_{t}^{\alpha(.,.)}\bigg(\bigg({}_{c}^{RL}I_{s}^{\beta(.,.)}(1)\bigg)^{-1}\bigg({}_{c}^{RL}I_{s}^{\beta(.,.)}F(t,s)\bigg)\bigg({}_{c}^{RL}I_{s}^{\beta(.,.)}G(t,s)\bigg)\bigg)\\
&\quad=\bigg({}_{c}^{RL}I_{s}^{\beta(.,.)}(1)\bigg)^{-1}\bigg({}_{a}^{RL}I_{t}^{\alpha(.,.)}\bigg(\bigg({}_{c}^{RL}I_{s}^{\beta(.,.)}F(t,s)\bigg)\bigg({}_{c}^{RL}I_{s}^{\beta(.,.)}G(t,s)\bigg)\bigg)\\
&\quad=\bigg({}_{c}^{RL}I_{s}^{\beta(.,.)}(1)\bigg)^{-1}\bigg({}_{a}^{RL}I_{t}^{\alpha(.,.)}(1)\bigg)^{-1}\bigg({}_{a}^{RL}I_{t}^{\alpha(.,.)}\bigg({}_{c}^{RL}I_{s}^{\beta(.,.)}F(t,s)\bigg)\bigg)\\
&\quad \times \bigg({}_{a}^{RL}I_{t}^{\alpha(.,.)}\bigg({}_{c}^{RL}I_{s}^{\beta(.,.)}G(t,s)\bigg)\bigg)
\end{split}
\end{equation*}this implies
\begin{equation*}
\begin{split}
&{}_{a}^{RL}I_{t}^{\alpha(.,.)}{}_{c}^{RL}I_{s}^{\beta(.,.)}F(t,s)G(t,s)\\
&\quad=\bigg({}_{c}^{RL}I_{s}^{\beta(.,.)}(1)\bigg)^{-1}\bigg({}_{a}^{RL}I_{t}^{\alpha(.,.)}(1)\bigg)^{-1}\bigg({}_{a}^{RL}I_{t}^{\alpha(.,.)}\bigg({}_{c}^{RL}I_{s}^{\beta(.,.)}F(t,s)\bigg)\bigg)\\
&\quad \times \bigg({}_{a}^{RL}I_{t}^{\alpha(.,.)}\bigg({}_{c}^{RL}I_{s}^{\beta(.,.)}G(t,s)\bigg)\bigg)
\end{split}
\end{equation*}
\end{proof}

\begin{remark}
To find the product rule, quotient rule, and chain rule formulas for Riemann-Liouville variable-order fractional derivative operator, use definition~(\ref{2.3}), that is,
\begin{equation}
{}_{a}^{RL}D_{t}^{\alpha(.,.)}f(t)=\frac{d}{dt}\bigg({}_{a}^{RL}I_{t}^{1-\alpha(.,.)}f(t)\bigg)
\end{equation}where $\alpha : [a,b] \times [a,b]\longrightarrow (0,1)$, $a\in \mathbb{R}$ and $t>a$. For example, let's see the next Theorem~(\ref{4.28}) which is product rule type-III.
\end{remark}

\begin{theorem}
\label{4.28}
Let $\alpha: [a,b] \times [a,b]\longrightarrow (0,1)$, $a\in \mathbb{R}$, $t>a$. Then 
\begin{equation}
\label{4.29}
\begin{split}
&\bigg({}_{a}^{RL}D_{t}^{\alpha(.,.)}(fg)(t)\bigg)\\
&\quad= -\bigg({}_{a}^{RL}I_{t}^{1-\alpha(.,.)}(1) \bigg)^{-2}
\bigg( {}_{a}^{RL}I_{t}^{1-\alpha(.,.)}g(t)\bigg)\bigg( {}_{a}^{RL}I_{t}^{1-\alpha(.,.)}f(t)\bigg)\bigg({}_{a}^{RL}D_{t}^{\alpha(.,.)}(1) \bigg)\\
&\qquad+ \bigg({}_{a}^{RL}I_{t}^{1-\alpha(.,.)}(1) \bigg)^{-1}
\bigg( {}_{a}^{RL}I_{t}^{1-\alpha(.,.)}g(t)\bigg)\bigg( {}_{a}^{RL}D_{t}^{\alpha(.,.)}f(t)\bigg)\\
&\qquad +\bigg({}_{a}^{RL}I_{t}^{1-\alpha(.,.)}(1) \bigg)^{-1}
\bigg( {}_{a}^{RL}I_{t}^{1-\alpha(.,.)}g(t)\bigg)\bigg( {}_{a}^{RL}D_{t}^{\alpha(.,.)}f(t)\bigg)
\end{split}
\end{equation}
\end{theorem}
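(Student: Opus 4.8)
The plan is to reduce the asserted differential identity to the already-proved integral identity of Corollary~(\ref{c4}) (product rule type-III) and then differentiate in $t$. First I would invoke definition~(\ref{2.3}) to write
\[
{}_{a}^{RL}D_{t}^{\alpha(.,.)}(fg)(t)=\frac{d}{dt}\Bigl({}_{a}^{RL}I_{t}^{1-\alpha(.,.)}(fg)(t)\Bigr).
\]
Since $\alpha$ takes values in $(0,1)$, the map $1-\alpha$ takes values in $(0,1)\subset(0,\infty)$, so Corollary~(\ref{c4}) applies with $\alpha(.,.)$ replaced by $1-\alpha(.,.)$ and gives
\[
{}_{a}^{RL}I_{t}^{1-\alpha(.,.)}(fg)(t)=\Bigl({}_{a}^{RL}I_{t}^{1-\alpha(.,.)}(1)\Bigr)^{-1}\Bigl({}_{a}^{RL}I_{t}^{1-\alpha(.,.)}g(t)\Bigr)\Bigl({}_{a}^{RL}I_{t}^{1-\alpha(.,.)}f(t)\Bigr).
\]
Here one notes that ${}_{a}^{RL}I_{t}^{1-\alpha(.,.)}(1)=\int_{a}^{t}(t-s)^{-\alpha(t,s)}/\Gamma(1-\alpha(t,s))\,ds>0$ for $t>a$, since the integrand is strictly positive, so the factor $({}_{a}^{RL}I_{t}^{1-\alpha(.,.)}(1))^{-1}$ is well defined and differentiable.

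Next I would abbreviate $P(t)=({}_{a}^{RL}I_{t}^{1-\alpha(.,.)}(1))^{-1}$, $Q(t)={}_{a}^{RL}I_{t}^{1-\alpha(.,.)}g(t)$ and $R(t)={}_{a}^{RL}I_{t}^{1-\alpha(.,.)}f(t)$, and apply the ordinary three-factor Leibniz rule $\frac{d}{dt}(PQR)=P'QR+PQ'R+PQR'$. The derivatives of the three factors are read off from definition~(\ref{2.3}): $Q'(t)={}_{a}^{RL}D_{t}^{\alpha(.,.)}g(t)$, $R'(t)={}_{a}^{RL}D_{t}^{\alpha(.,.)}f(t)$, and, by the reciprocal rule together with ${}_{a}^{RL}D_{t}^{\alpha(.,.)}(1)=\frac{d}{dt}({}_{a}^{RL}I_{t}^{1-\alpha(.,.)}(1))$,
\[
P'(t)=-\Bigl({}_{a}^{RL}I_{t}^{1-\alpha(.,.)}(1)\Bigr)^{-2}\frac{d}{dt}\Bigl({}_{a}^{RL}I_{t}^{1-\alpha(.,.)}(1)\Bigr)=-\Bigl({}_{a}^{RL}I_{t}^{1-\alpha(.,.)}(1)\Bigr)^{-2}\,{}_{a}^{RL}D_{t}^{\alpha(.,.)}(1).
\]
Substituting these into $P'QR+PQ'R+PQR'$ and collecting factors produces precisely the right-hand side of equation~(\ref{4.29}): the first summand is $P'QR$, while the remaining two are the $f\leftrightarrow g$ symmetric pair $PQR'$ and $PQ'R$.

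The only genuine obstacle is the smoothness bookkeeping. The termwise differentiation above is legitimate only when each of $P$, $Q$, $R$ is differentiable in $t$ on $(a,b)$ --- equivalently, when ${}_{a}^{RL}I_{t}^{1-\alpha(.,.)}(1)$, ${}_{a}^{RL}I_{t}^{1-\alpha(.,.)}f(t)$ and ${}_{a}^{RL}I_{t}^{1-\alpha(.,.)}g(t)$ are differentiable, which is exactly the standing hypothesis under which ${}_{a}^{RL}D_{t}^{\alpha(.,.)}$ is defined in definition~(\ref{2.3}). Under that hypothesis nothing beyond the classical product and reciprocal rules is needed, since all the fractional-analysis content has already been absorbed into Corollary~(\ref{c4}); one should simply record at the outset that $f$, $g$ and the constant function $1$ lie in the domain of ${}_{a}^{RL}D_{t}^{\alpha(.,.)}$.
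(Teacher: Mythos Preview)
Your argument is essentially the same as the paper's: invoke definition~(\ref{2.3}), apply Corollary~\ref{c4} (product rule type-III) with exponent $1-\alpha(.,.)$, and then differentiate the resulting three-factor expression via the ordinary Leibniz and reciprocal rules. Your write-up is in fact a bit more careful about hypotheses (positivity of ${}_{a}^{RL}I_{t}^{1-\alpha(.,.)}(1)$, differentiability), and your observation that the last two summands are the $f\leftrightarrow g$ symmetric pair $PQR'$ and $PQ'R$ quietly corrects an apparent typographical slip in~(\ref{4.29}), whose third term should read $\bigl({}_{a}^{RL}I_{t}^{1-\alpha(.,.)}(1)\bigr)^{-1}\bigl({}_{a}^{RL}I_{t}^{1-\alpha(.,.)}f(t)\bigr)\bigl({}_{a}^{RL}D_{t}^{\alpha(.,.)}g(t)\bigr)$.
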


\begin{proof}
From definition~(\ref{2.3}), we have,
\begin{equation}
\label{4.30}
{}_{a}^{RL}D_{t}^{\alpha(.,.)}f(t)g(t)=\frac{d}{dt}\bigg({}_{a}^{RL}I_{t}^{1-\alpha(.,.)}f(t)g(t)\bigg)
\end{equation}now use product rule type-III for the right-hand side of equation~(\ref{4.30}), we have,
\begin{equation}
\label{4.31}
\begin{split}
{}_{a}^{RL}D_{t}^{\alpha(.,.)}f(t)g(t)&=\frac{d}{dt}\bigg({}_{a}^{RL}I_{t}^{1-\alpha(.,.)}f(t)g(t)\bigg)\\
&=\frac{d}{dt}\bigg(\bigg({}_{a}^{RL}I_{t}^{1-\alpha(.,.)}(1)\bigg)^{-1}\bigg( {}_{a}^{RL}I_{t}^{1-\alpha(.,.)}f(t)\bigg)\bigg({}_{a}^{RL}I_{t}^{1-\alpha(.,.)}g(t) \bigg)\bigg)
\end{split}
\end{equation}
now use Leibniz product Rule for the right-hand side of equation~(\ref{4.31}), we have,
\begin{equation*}
\begin{split}
{}_{a}^{RL}D_{t}^{\alpha(.,.)}f(t)g(t)&=\frac{d}{dt}\bigg({}_{a}^{RL}I_{t}^{1-\alpha(.,.)}f(t)g(t)\bigg)\\
&=\frac{d}{dt}\bigg(\bigg({}_{a}^{RL}I_{t}^{1-\alpha(.,.)}(1)\bigg)^{-1}\bigg( {}_{a}^{RL}I_{t}^{1-\alpha(.,.)}f(t)\bigg)\bigg({}_{a}^{RL}I_{t}^{1-\alpha(.,.)}g(t) \bigg)\bigg)\\
&=\bigg( {}_{a}^{RL}I_{t}^{1-\alpha(.,.)}f(t)\bigg)\bigg({}_{a}^{RL}I_{t}^{1-\alpha(.,.)}g(t) \bigg)\frac{d}{dt}\bigg({}_{a}^{RL}I_{t}^{1-\alpha(.,.)}(1)\bigg)^{-1}\\
&\quad+\bigg({}_{a}^{RL}I_{t}^{1-\alpha(.,.)}(1)\bigg)^{-1}\bigg({}_{a}^{RL}I_{t}^{1-\alpha(.,.)}g(t) \bigg)\frac{d}{dt}\bigg( {}_{a}^{RL}I_{t}^{1-\alpha(.,.)}f(t)\bigg)\\
&\quad+\bigg({}_{a}^{RL}I_{t}^{1-\alpha(.,.)}(1)\bigg)^{-1}\bigg( {}_{a}^{RL}I_{t}^{1-\alpha(.,.)}f(t)\bigg)\frac{d}{dt}\bigg({}_{a}^{RL}I_{t}^{1-\alpha(.,.)}g(t) \bigg)\\
&= -\bigg({}_{a}^{RL}I_{t}^{1-\alpha(.,.)}(1) \bigg)^{-2}
\bigg( {}_{a}^{RL}I_{t}^{1-\alpha(.,.)}g(t)\bigg)\bigg( {}_{a}^{RL}I_{t}^{1-\alpha(.,.)}f(t)\bigg)\bigg({}_{a}^{RL}D_{t}^{\alpha(.,.)}(1) \bigg)\\
&\quad+ \bigg({}_{a}^{RL}I_{t}^{1-\alpha(.,.)}(1) \bigg)^{-1}
\bigg( {}_{a}^{RL}I_{t}^{1-\alpha(.,.)}g(t)\bigg)\bigg( {}_{a}^{RL}D_{t}^{\alpha(.,.)}f(t)\bigg)\\
&\quad +\bigg({}_{a}^{RL}I_{t}^{1-\alpha(.,.)}(1) \bigg)^{-1}
\bigg( {}_{a}^{RL}I_{t}^{1-\alpha(.,.)}g(t)\bigg)\bigg( {}_{a}^{RL}D_{t}^{\alpha(.,.)}f(t)\bigg)
\end{split}
\end{equation*}
\end{proof}

\justify
{\bf Authors’ contributions: }
All authors worked jointly and all the authors read and approved the final manuscript.\\
{\bf Funding:} 
This research received no external funding.\\
{\bf Conflicts of Interest:} The authors declare no conflict of interest.

\end{document}